\newcommand{\ch}{\mathbf{1}}
\newcommand{\bX}{\mathbf{X}}
\newcommand{\Z}{\mathbb{Z}}
\newcommand{\C}{\mathbb{C}}
\newcommand{\N}{\mathbb{N}}
\newcommand{\T}{\mathbb{T}}
\newcommand{\Xb}{\mathbf{X}}
\newcommand{\al}{\alpha}
\newcommand{\del}{\delta}
\newcommand{\sig}{\sigma}
\newcommand{\la}{\lambda}
\newcommand{\ol}{\overline}
\newcommand{\br}{\vspace{3 mm}}
\newcommand{\tri}{\bigtriangleup}
\newcommand{\rest}{\upharpoonright}
\newcommand{\cls}{{\rm{cls\,}}}
\newcommand{\supp}{{\rm{supp\,}}}
\newcommand{\card}{{\rm{card\,}}}
\newcommand{\spann}{{\rm{span}}}
\mathchardef\mhyphen="2D
\theoremstyle{plain}
\newtheorem{thm}{Theorem}[section]
\newtheorem{cor}[thm]{Corollary}
\newtheorem{lem}[thm]{Lemma}
\newtheorem{prop}[thm]{Proposition}
\theoremstyle{definition}
\newtheorem{defn}[thm]{Definition}
\newtheorem{rmk}[thm]{Remark}
\newtheorem{prob}[thm]{Problem}
\numberwithin{thm}{section}
\begin{document}

\title
{Hypercyclic operators, Gauss measures and Polish dynamical systems}

\author{Iftah Dayan and Eli Glasner}
\address{Department of Mathematics\\
     Tel Aviv University\\
         Tel Aviv\\
         Israel}
\email{iftahday@math.tau.ac.il}         
\email{glasner@math.tau.ac.il}
 \urladdr{http://www.math.tau.ac.il/$^\sim$glasner}

\date{February 20, 2013}

\begin{abstract}
In this work we consider hypercyclic operators as a special case of
Polish dynamical systems. In the first section we analyze the 
construction of Bayart and Grivaux of a hypercyclic operator
which preserves a Gaussian measure, and derive a description of
the maximal spectral type of the Koopman operator associated to the corresponding
measure preserving dynamical system. We then use this
information to show the existence of a mildly but not strongly mixing hypercyclic 
operator on Hilbert space. In the last two sections we study hypercyclic  
and frequently hypecyclic operators which, as Polish dynamical systems are, M-systems, E-systems, and syndetically transitive systems.
\end{abstract}

\subjclass[2000]{47A16, 47A35, 37A05, 37A25, 37B05, 37B20, 54H20, 60G15}

\keywords{Polish dynamical system, hypercyclic, frequently hypercyclic, 
syndetically hypercyclic, Gaussian measure,
Guassian automorphism, maximal spectral type, mild mixing, M-system, E-system,
chaotic linear system, residual property}


\thanks{{This work is supported by ISF grant 
\#1157/08}}

\maketitle
\tableofcontents

A {\em Polish dynamical system} is a pair $(Z,T)$ where $Z$ is a Polish space 
(i.e. a completely metrizable and separable topological space) and
$T : Z \to Z$ is a homeomorphism (or sometimes, more generally, a continuous map
with dense image).

Let $X$ be an infinite dimensional, separable Banach space
(or sometimes, more generally, a separable Fr\'echet space).
We denote by  $\mathcal{L}(X)$ the space of bounded linear
operators on $X$. As usual $X^*$ denotes the dual space of $X$. 
If $T$ is an element of $\mathcal{L}(X)$ with a dense range then we can
consider the system $(X,T)$ as a Polish dynamical system. We then say that
$(X,T)$ is a {\em linear system}. When a linear system is topologically transitive,
i.e. when for any two nonempty open sets $U, V$ in $X$ there is some $n \in \N$
with $T^n U \cap V \not=\emptyset$, then the operator $T$ is called
{\em hypercyclic}.

In this work we will assume some basic knowledge of ergodic theory,
of the general theory of (compact metric) dynamical systems, and of the 
theory of hypercyclic operators. We refer to \cite{F} and \cite{Gl-03} for the first two
and to \cite{BM} for the latter. We thank Benjy Weiss and Sophie Grivaux for some
helpful remarks.

\section{On linear Gauss transformations}


Recall the following definition \cite[Definition 5.7]{BM}.
\begin{defn}
A {\em Gaussian measure on a Banach space $X$} is a probability measure $\mu$ on
$X$ such that each continuous linear functional $x^* \in X^*$ has complex Gaussian distribution, when considered as a random variable on the probability space
$(X,\mathcal{B},\mu)$.
\end{defn}

Following \cite{LPT}, by a
{\em Gaussian probability space} we mean a standard probability space $(Z,\mathcal{B},\mu)$ 
together with an infinite-dimensional closed real subspace $H^r$ of $L_2(Z,\mu)$ such that
the $\sig$-algebra generated by $H^r$, considered as a collection of random variables, is
all of $\mathcal{B}$ and each non-zero function of $H^r$ has a Gaussian distribution.
We refer to the subspace $H = H^r +i H^r$ as the {\em complex Gaussian space} 
(or {\em the first Weiner Chaos}) of $L_2(Z,\mu)$. 
We define a {\em generalized Gaussian automorphism}, or simply a 
{\em Gaussian automorphism}, as an ergodic automorphism $S$ of $(Z,\mathcal{B},\mu)$ 
such that $H$ is invariant under $U_S$, the Koopman operator defined by $S$ on
$L_2(Z,\mu)$.  We call $S$ a {\em standard Gaussian automorphism} 
when $H$ is a cyclic space for $U_S$; i.e., when $U_S \rest H$ has simple spectrum.

There is however another kind of linear space of centered complex Gaussian random variables. 
To make the distinction clear we cite the following two theorems from \cite[Propositions 1.33
and 1.34]{Ja}.

\begin{thm} \label{type1}
If $H$ is a complex linear space of centered complex Gaussian variables, 
then the following are equivalent.
\begin{enumerate}
\item
$H$ is the complexification of some real Gaussian space.
\item
$H=(\Re H)_\C$.
\item
$H = \ol{H}$.
\item
If $\zeta \in H$, then $\ol{\zeta} \in H$.
\item
If $\zeta \in H$, then $\Re  {\zeta}, \Im \zeta \in H$.
\end{enumerate}
\end{thm}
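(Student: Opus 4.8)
The plan is to prove the five conditions equivalent by a short cycle of implications, with the guiding principle that essentially everything is formal linear algebra for the conjugation involution $\zeta\mapsto\ol\zeta$ on $L_2(Z,\mu)$, the only genuinely analytic input being the standard fact that the real and imaginary parts of a centered complex Gaussian variable form a (jointly) centered real Gaussian pair; I would cite this from the relevant definition of ``complex Gaussian variable'' in \cite{Ja}. A convenient route is $(1)\Leftrightarrow(2)$, $(2)\Leftrightarrow(3)$, $(3)\Leftrightarrow(4)$, $(4)\Leftrightarrow(5)$.

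First I would dispose of the three conditions $(3),(4),(5)$, which are equivalent for purely formal reasons. Since $\zeta\mapsto\ol\zeta$ is an involutive bijection of $L_2$, the inclusion $\ol H\subseteq H$ of $(4)$ applied under conjugation gives $H\subseteq\ol H$, hence $H=\ol H$, i.e.\ $(3)$; and $(3)\Rightarrow(4)$ is immediate. For $(4)\Leftrightarrow(5)$ one uses only that $H$ is a complex linear space: if $\zeta,\ol\zeta\in H$ then $\Re\zeta=\tfrac12(\zeta+\ol\zeta)\in H$ and $\Im\zeta=\tfrac1{2i}(\zeta-\ol\zeta)\in H$, while conversely $\Re\zeta,\Im\zeta\in H$ forces $\ol\zeta=\Re\zeta-i\Im\zeta\in H$.

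Next, $(1)\Leftrightarrow(2)$. If $H=G_\C=G+iG$ for a real Gaussian space $G$, then $\Re H=G$ — each $g_1+ig_2$ has real part $g_1\in G$, and each $g\in G$ is the real part of $g\in H$ — so $H=(\Re H)_\C$. Conversely, assuming $H=(\Re H)_\C$, I would check that $\Re H$ is a legitimate real Gaussian space: it is visibly a real-linear subspace of $L_2$, and each of its elements, being the real part of a centered complex Gaussian variable of $H$, is a centered real Gaussian variable (here the analytic input is used); if the ambient notion of ``Gaussian space'' is taken to include closedness, one notes in addition that $\Re H=H\cap L_2^{\R}$ is closed when $H$ is closed and conjugation-invariant. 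Finally, to close the loop I would prove $(2)\Leftrightarrow(3)$: if $\zeta=g_1+ig_2\in(\Re H)_\C$ then $\ol\zeta=g_1-ig_2\in(\Re H)_\C=H$, giving $(2)\Rightarrow(3)$; and conversely, given $H=\ol H$, for $\zeta\in H$ both $\Re\zeta=\tfrac12(\zeta+\ol\zeta)$ and $\Im\zeta=\Re(-i\zeta)$ lie in $\Re H$ so that $\zeta=\Re\zeta+i\Im\zeta\in(\Re H)_\C$, while for the reverse inclusion, writing $g_j=\Re\zeta_j$ with $\zeta_j\in H$, conjugation-invariance gives $g_1,g_2\in H$ and hence $g_1+ig_2\in H$.

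I do not expect any serious obstacle. The one point that requires care rather than bookkeeping is the step inside $(2)\Rightarrow(1)$, namely that $\Re H$ genuinely deserves the name ``real Gaussian space,'' which hinges entirely on the convention adopted in \cite{Ja} for what it means for a complex variable to be Gaussian; under the usual convention — $\zeta$ is complex Gaussian precisely when the real pair $(\Re\zeta,\Im\zeta)$ is Gaussian — this is immediate, and everything else in the argument is elementary manipulation with the conjugation map.
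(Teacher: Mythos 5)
Your proof is correct. Note, however, that the paper does not prove this statement at all: it is quoted verbatim from \cite[Proposition 1.33]{Ja} (Janson's \emph{Gaussian Hilbert Spaces}), so there is no in-paper argument to compare against. Your self-contained argument is sound: the chain $(3)\Leftrightarrow(4)\Leftrightarrow(5)$ and $(2)\Leftrightarrow(3)$ is pure conjugation algebra using only that $H$ is a complex linear space and that $\zeta\mapsto\ol\zeta$ is an involution (e.g.\ $\ol H\subseteq H$ conjugates to $H\subseteq\ol H$, and $\Re\zeta=\tfrac12(\zeta+\ol\zeta)$, $\Im\zeta=\Re(-i\zeta)$ handle the rest), and you correctly isolate the single non-formal point, namely that in $(2)\Rightarrow(1)$ one must know $\Re H$ is a genuine real Gaussian space, which under Janson's convention (a complex variable is Gaussian iff the pair $(\Re\zeta,\Im\zeta)$ is jointly real Gaussian) is immediate since every element of $\Re H$ is by definition the real part of some $\zeta\in H$. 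The remark that $\Re H=H\cap L_2^{\R}$ is closed when $H$ is closed and conjugation-invariant appropriately covers the convention in which Gaussian spaces are required to be closed. No gaps.
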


\begin{thm}\label{type2}
If $V$ is a complex linear space of centered complex Gaussian variables, 
then the following are equivalent.
\begin{enumerate}
\item
$V$ is a space of symmetric Gaussian variables (i.e. $\la \zeta$ has the same distribution
as $\zeta$ for every $\zeta \in V$ and $\la \in \C$ with $|\la|=1$).
\item
$V$ and $\ol{V}$ are orthogonal.
\item
$(\Re V)_\C = V \oplus \ol{V}$.
\item
If $\zeta \in V$, then $\Re{\zeta}$ and $\Im\zeta$ are independent.
\item
The real linear mapping $\zeta \mapsto \sqrt{2} \Re\zeta$ is an isometry
of $V$ onto $\Re V$.
\end{enumerate}
\end{thm}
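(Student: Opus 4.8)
The plan is to prove the five conditions equivalent by reducing everything to one elementary fact about an individual centered complex Gaussian variable and then exploiting the stability of $V$ under complex scalar multiplication. Write $\zeta = \xi + i\eta \in V$ with $(\xi,\eta)$ a centered jointly Gaussian pair; the law of $\zeta$ is then determined by the three second moments $\E\xi^2,\E\eta^2,\E(\xi\eta)$, equivalently by the pair $\bigl(\E|\zeta|^2,\E\zeta^2\bigr)$ since $\E|\zeta|^2=\E\xi^2+\E\eta^2$ and $\E\zeta^2=\E\xi^2-\E\eta^2+2i\,\E(\xi\eta)$. From this I would extract three remarks: (i) $\zeta$ is symmetric (i.e. $\la\zeta$ has the same distribution as $\zeta$ for all $|\la|=1$) iff $\E\zeta^2=0$, because rotation by $\la$ fixes $\E|\zeta|^2$ and multiplies $\E\zeta^2$ by $\la^2$; (ii) $\E\zeta^2=0$ is equivalent to $\E\xi^2=\E\eta^2$ together with $\E(\xi\eta)=0$, and, $(\xi,\eta)$ being jointly Gaussian, the condition $\E(\xi\eta)=0$ is in turn equivalent to independence of $\Re\zeta$ and $\Im\zeta$; (iii) $2\,\E(\Re\zeta)^2=\E|\zeta|^2$ exactly when $\E\xi^2=\E\eta^2$.

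Now for the cycle. For $(1)\Leftrightarrow(2)$: by (i), $(1)$ says $\E\zeta^2=0$ for every $\zeta\in V$, and polarizing the complex bilinear form $(\zeta,\omega)\mapsto\E(\zeta\omega)$ on the complex space $V$ (substitute $\zeta+\omega$ and $\zeta+i\omega$) upgrades this to $\E(\zeta\omega)=0$ for all $\zeta,\omega\in V$, i.e. $\langle\zeta,\ol\omega\rangle=0$, i.e. $V\perp\ol V$. For $(2)\Leftrightarrow(3)$: one checks directly that $(\Re V)_\C=V+\ol V$ as subspaces of $L_2(Z,\mu)$ --- the inclusion $\supseteq$ from $\zeta=\Re\zeta+i\,\Im\zeta$ and $\Im\zeta=\Re(-i\zeta)$, the inclusion $\subseteq$ from $\Re\zeta=\tfrac12(\zeta+\ol\zeta)$ together with $iV=V$ --- so $(3)$ is precisely the statement that this sum is orthogonal, which is $(2)$. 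For $(1)\Leftrightarrow(4)$: if $(1)$ holds then (ii) gives $(4)$; conversely, if $\Re\zeta$ and $\Im\zeta$ are independent for all elements of $V$, apply this to $\zeta$ to get $\E(\xi\eta)=0$ and to $(1+i)\zeta=(\xi-\eta)+i(\xi+\eta)\in V$ to get $\E\bigl((\xi-\eta)(\xi+\eta)\bigr)=0$, hence $\E\xi^2=\E\eta^2$, so $\E\zeta^2=0$ and $\zeta$ is symmetric. For $(1)\Leftrightarrow(5)$: the real-linear map $\zeta\mapsto\sqrt2\,\Re\zeta$ always carries $V$ onto $\Re V$ (since $\Im\zeta=\Re(-i\zeta)$), so the content of $(5)$ is that it is norm-preserving; by (iii) this holds for a given $\zeta$ iff $\E\xi^2=\E\eta^2$, and requiring it for all of $V$ --- testing on $\zeta$ and on $(1+i)\zeta$ and using $\E|(1+i)\zeta|^2=2\E|\zeta|^2$ --- forces both $\E\xi^2=\E\eta^2$ and $\E(\xi\eta)=0$, hence $(1)$; the converse is immediate from (iii).

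The main obstacle I anticipate lies in the last two implications: the one-variable hypotheses in $(4)$ and $(5)$ are each strictly weaker than symmetry of that single variable, and it is precisely the closure of $V$ under multiplication by $i$ (equivalently by $1+i$) that promotes them to $(1)$; so these must be handled as conditions on the whole space $V$, not on individual vectors, and the choice of test rotations matters. The polarization in $(1)\Leftrightarrow(2)$ also deserves a careful line --- one polarizes the bilinear pairing $\E(\zeta\omega)$, not the Hermitian inner product. Everything else is routine Gaussian and Hilbert-space bookkeeping; in particular no appeal to Theorem~\ref{type1} is required.
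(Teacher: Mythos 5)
Your argument is correct. Note that the paper itself gives no proof of this statement --- it is quoted from Janson \cite{Ja} --- so what you have produced is a self-contained elementary substitute, and it follows the standard route: the law of a centered complex Gaussian $\zeta=\xi+i\eta$ is determined by the pair $(\E|\zeta|^2,\E\zeta^2)$, symmetry of $\zeta$ is exactly $\E\zeta^2=0$, and polarizing the symmetric complex bilinear form $(\zeta,\omega)\mapsto\E(\zeta\omega)$ converts the pointwise condition (1) into the orthogonality condition (2). The part that genuinely requires care is (4) and (5), and you handle it correctly: for a single vector each of these conditions is strictly weaker than symmetry of that vector ((4) alone gives only $\E(\xi\eta)=0$, (5) alone only $\E\xi^2=\E\eta^2$), and it is the invariance of $V$ under multiplication by $1+i$ that supplies the missing half in each case --- this is exactly where a naive vector-by-vector argument would fail. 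Two small remarks. First, for the symmetric bilinear form $\E(\zeta\omega)$ the single substitution $\zeta+\omega$ already polarizes (characteristic $\ne 2$); the extra substitution $\zeta+i\omega$ is needed only for sesquilinear forms, so that step of yours is redundant but harmless. Second, in (3) the symbol $\oplus$ must be read as \emph{orthogonal} direct sum (Janson's convention), as your proof implicitly does; the merely algebraic reading would make (3) strictly weaker --- for instance $V=\C(\xi+2i\eta)$ with $\xi,\eta$ independent standard normals satisfies $(\Re V)_\C=V+\ol{V}$ with $V\cap\ol{V}=0$, yet $V$ is not a space of symmetric variables. It would be worth stating that reading explicitly at the start of your proof of $(2)\Leftrightarrow(3)$.
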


We will refer to spaces of the type described in Theorem \ref{type1}
(\ref{type2}) as {\em Gaussian spaces of the first (second) type}, respectively. 
Of course when $V$ is of the second type then
$H : =V \oplus \ol{V}$ is of the first type.

Suppose now that on the linear system $(X,T)$ there exists a Gaussian $T$-invariant 
measure $\mu$ with $\supp(\mu)\allowbreak =X$. Let $U_T : L_2(X,\mu)
\to L_2(X,\mu)$ denote the Koopman operator of the dynamical system
$\bX = (X,\mathcal{B},\mu,T)$. 
Let $K^* : X^* \to L_2(X,\mu)$ be the conjugate linear map sending an element
$x^* \in X^*$ to the $L_2$-function: $x \mapsto \ol{x^*(x)}
= \ol {\langle x^*, x \rangle}$.
Thus the linear operator $R = KK^*$, is the 
{\em Gaussian covariance operator} corresponding to $\mu$; i.e. the unique operator
$R : X^* \to X$ which satisfies the identity
$$
y^*(R x^*)= 
\int_X \ol {\langle x^*, z \rangle}   \langle y^*, z \rangle \,d\mu(z)
= \langle  x^*, y^* \rangle_{L_2(\mu)}. 
$$
We note that since $U_T$ preserves the subspace of real functions, 
its maximal spectral type $\sig_U$ must be the type 
of a symmetric measure ($\sig$ is said to be {\em symmetric} if $\sig(A) = \sig(\bar{A})$
for any Borel subset $A \subset \T$).

For more details see \cite[Chapter 5]{BM}.

We will consider two dynamical properties of a measure preserving
dynamical system $\Xb = (X,\mathcal{B},\mu,T)$ which may not be familiar to all readers.

\begin{defn}
\begin{enumerate}
\item 
We say that $\Xb$ is {\em rigid} if there is a sequence
$n_k \nearrow \infty$ such that $\lim_{k \to \infty} \mu(T^{n_k} A\tri A) =0$, for every
$A \in \mathcal{B}$. 
\item
We say that $\Xb$ is {\em mildly mixing} if it admits no nontrivial rigid factors. 
An equivalent condition, which we will adopt here, is the spectral condition
$$
\limsup_{|n| \to \infty} |\hat{\sig}_f(n)| < 1,
$$
for every $f \in L_2^0(\mu) = \{f \in L_2(\mu) : \int f \,d\mu=0\}$ with $\|f\| =1$.
Here $\hat{\sig}_f(n)$  is the matrix coefficient $\langle U_T^n f, f \rangle$
of the Koopman operator $U_T$ (see e.g. \cite[Exercise 8.17]{Gl-03}).
\item
Call a probability measure $\rho$ on $\T$ {\em mildly mixing} when
$$
\limsup_{|n| \to \infty} |\hat{\theta}(n)| < 1,
$$
for every probability measure $\theta \ll \rho$ on $\T$.
Then, in these terms, $\Xb$ is mildly mixing iff the maximal spectral type of $U_T$
restricted to $L_2^0(\mu)$
is a mildly mixing measure on $\T$. 
(See \cite{HMP} and page 13 and Proposition III.21 of \cite{Qu}.)
\end{enumerate}
\end{defn}

For more details on rigidity and mild mixing we refer to the original paper of
Furstenberg and Weiss \cite{FW} where the notion of mild mixing was introduced, 
and for further developments to the Notes to Chapter 8 of \cite{Gl-03}. 
In the following theorem parts (2) to (5) are direct corollaries of part (1).
The results in parts (2) and (3) are stated (and given different proofs) e.g. in 
\cite[Proposition 5.36]{BM}, and that of part (4) is in \cite{EG}. 
Parts (1) and (5) seem to be new.

\begin{thm}\label{rho}
Let $V = {\cls}{ K^* X^*}\subset L_2(X,\mu)$. 
Then the closed $U_T$-invariant subspace $V$ is a complex Gaussian space
of the second type.
Denoting $V^r = H^r = \{\Re{f} : f \in V\}$ we have $H = H^r + i H^r = V + \overline{V}$.
Let $\rho$ be the maximal spectral type
of the unitary operator $U = U_T \rest H$ (a symmetric probability measure, 
or rather measure class, on the circle $\T$).
\begin{enumerate}
\item
$H$ is a Gaussian space, and it forms the first Wiener chaos of of $L_2(X,\mu)$.
Thus the automorphism $T$ together with the subspace $H$ define a generalized Gauss 
automorphism. The maximal spectral type of $U_T$ is
$$
\exp(\rho) = \sum_{n=0}^\infty (n!)^{-1}\rho^{*n},
$$ 
where $\rho^{*0} = \delta_1$ and $\rho^{*n} = \rho * \rho * \cdots * \rho$,
($n$ times) is the convolutional $n$-th power, for $n \ge 1$.
\item
The measure dynamical system $\bX = (X,\mathcal{B},\mu,T)$ is ergodic iff
it is weakly mixing, iff the measure $\rho$ is continuous (= atomless).
\item
The system $\bX$ is mixing iff $\rho$ is a Rajchman measure 
(i.e. $\lim_{|n| \to \infty}\hat{\rho}(n) = 0$).
\item
The system $\bX$ is rigid (with respect to the sequence $n_k \nearrow \infty$)
iff $\rho$ is a Dirichlet measure (with $\hat{\rho}(n_k) \to 1$).
\item
The system $\bX$ is mildly mixing iff 
$ \rho$
is mildly mixing 
\end{enumerate}
\end{thm}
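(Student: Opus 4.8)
The plan is to establish part (1) first, since the excerpt explicitly states that parts (2)--(5) are direct corollaries. The heart of part (1) is the classical structure theory of Gaussian dynamical systems (as in Cornfeld--Fomin--Sinai or Neveu), adapted to the generalized setting where $H$ need not be cyclic. First I would verify the claim that $V = \cls K^*X^*$ is a complex Gaussian space of the second type: every element of $K^*X^*$ is a limit in $L_2$ of functions of the form $x \mapsto \overline{\langle x^*, x\rangle}$, each of which is a centered complex Gaussian variable because $\mu$ is a Gaussian measure on $X$; moreover these are symmetric (rotation-invariant in distribution) since $\langle x^*, \cdot\rangle$ is \emph{complex} Gaussian, so by Theorem~\ref{type2} the closed linear span $V$ consists of symmetric Gaussians, and $V \perp \overline V$. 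Setting $H = V + \overline V = V \oplus \overline V$, Theorem~\ref{type1} identifies $H$ as a genuine (first-type) complex Gaussian space, the complexification of $H^r = \Re V$. That $H$ is $U_T$-invariant follows because $U_T$ permutes the generators: $U_T(K^*x^*)(x) = \overline{\langle x^*, Tx\rangle} = \overline{\langle T^*x^*, x\rangle} = K^*(T^*x^*)(x)$, so $U_T V \subseteq V$ (with dense range, hence $U_T V = V$ by unitarity) and $U_T\overline V = \overline V$.

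Next I would argue that $H$ is exactly the first Wiener chaos and that $U_T$ acts on all of $L_2(X,\mu)$ as the Gaussian (exponential/Fock) lift of $U = U_T\rest H$. The key point is that the $\sigma$-algebra $\mathcal{B}$ is generated by $X^*$ (equivalently by $H^r$): this holds because $\supp\mu = X$ and the Borel structure of the separable Banach space $X$ is generated by the continuous linear functionals. Hence $(X,\mathcal{B},\mu)$ together with $H^r$ is a Gaussian probability space in the sense defined in the excerpt, and $T$ is a generalized Gaussian automorphism. Then the standard Wiener--It\^o chaos decomposition gives $L_2(X,\mu) = \bigoplus_{n\ge 0} \mathcal{H}^{(n)}$, where $\mathcal{H}^{(n)}$ is the $n$-th homogeneous chaos (closed span of Hermite polynomials of degree $n$ in elements of $H^r$), with $\mathcal{H}^{(0)} = \C\cdot\ch$, $\mathcal{H}^{(1)} = H$, and $U_T$ restricted to $\mathcal{H}^{(n)}$ unitarily equivalent to the $n$-th symmetric tensor power of $U$. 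Since the maximal spectral type of the $n$-th symmetric power of a unitary with maximal spectral type $\rho$ is $\rho^{*n}$ (the $n$-fold convolution), and these powers are mutually absolutely continuous across the direct sum with the stated combinatorial multiplicities, the maximal spectral type of $U_T$ is the equivalence class of $\sum_{n\ge 0}(n!)^{-1}\rho^{*n} = \exp(\rho)$. I would cite the relevant sections of \cite{BM} (Chapter 5) and \cite{Ja} for the chaos decomposition and the spectral computation, filling in only the identification of $V$, $H$, and the $\sigma$-algebra as above.

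For parts (2)--(5) I would simply read off the consequences from the formula $\sigma_{U_T}\sim\exp(\rho)$ restricted to $L_2^0(\mu) = \bigoplus_{n\ge 1}\mathcal{H}^{(n)}$, whose maximal spectral type is $\sum_{n\ge 1}(n!)^{-1}\rho^{*n}$. For (2): $\exp(\rho)$ restricted to $L_2^0$ has an atom at $1$ iff $\rho$ has an atom (an atom of $\rho$ at $e^{i\theta}$ produces an atom of $\rho^{*n}$ at $e^{in\theta}$ and conversely the convolution of a continuous measure is continuous), and weak mixing is the absence of nontrivial point spectrum; ergodicity equals weak mixing for Gaussian systems by this same dichotomy. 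For (3): $\widehat{\rho^{*n}}(k) = \widehat\rho(k)^n$, so all convolution powers are Rajchman iff $\rho$ is, and mixing of $U_T$ is equivalent to $\widehat{\sigma_{U_T\rest L_2^0}}(k)\to 0$. For (4): rigidity along $n_k$ means $\widehat{\sigma_f}(n_k)\to 1$ for all $f\in L_2^0$ of norm $1$; taking $f$ in the first chaos this forces $\widehat\rho(n_k)\to 1$ (Dirichlet), and conversely if $\widehat\rho(n_k)\to 1$ then $\widehat{\rho^{*n}}(n_k) = \widehat\rho(n_k)^n \to 1$ for every $n$, giving rigidity of the whole system. For (5): this is the one flagged as new, and it follows from the characterization of mild mixing via mildly mixing maximal spectral type on $L_2^0$ (quoted in the Definition, citing \cite{HMP} and \cite{Qu}); the content is the measure-theoretic lemma that $\exp(\rho)$ (restricted to $L_2^0$) is a mildly mixing measure class on $\T$ if and only if $\rho$ is, which I would prove by showing that a non-mildly-mixing $\theta\ll\exp(\rho)$ with $\limsup|\widehat\theta(n_k)|=1$ can, via the convolution structure and the Furstenberg--Weiss dichotomy (rigid vs.\ mildly mixing), be pushed down to a rigidity sequence for $\rho$ itself, and conversely a Dirichlet-type subsequence for $\rho$ lifts to one for some convolution power.

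The main obstacle I anticipate is the honest verification of part (5) --- specifically the claim that mild mixing of the measure class $\exp(\rho)\rest L_2^0$ reduces to mild mixing of $\rho$. The ``only if'' direction is immediate (restrict to the first chaos), but the ``if'' direction requires controlling all convolution powers $\rho^{*n}$ simultaneously: one must show that if each $\rho^{*n}$ is mildly mixing then so is their (weighted) sum, which is not a formal consequence of mild mixing being preserved under convolution and countable sums without care, since absolute continuity classes $\theta\ll\sum(n!)^{-1}\rho^{*n}$ can spread mass across infinitely many chaos levels. I expect the cleanest route is dynamical rather than purely spectral: use that a Gaussian system is mildly mixing iff it has no nontrivial rigid factor (Furstenberg--Weiss), combine with the fact (essentially in \cite{Qu}) that every factor of a Gaussian system is ``Gaussian over'' a sub-Gaussian-space, so a rigid factor would be governed by a part of $\rho$ that is Dirichlet, contradicting mild mixing of $\rho$. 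Everything else is bookkeeping with the chaos decomposition and standard convolution identities.
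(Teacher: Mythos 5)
Your treatment of part (1) and of parts (2)--(4) follows essentially the same route as the paper: identify $V$ as a second-type Gaussian space and $H = V \oplus \overline{V}$ as the first chaos, note that $X^*$ generates the Borel $\sigma$-algebra so that $T$ together with $H^r$ is a generalized Gaussian automorphism, invoke the Wiener--It\^o decomposition to get that the maximal spectral type of $U_T$ on $L_2^0(\mu)$ is $\eta = \frac{1}{e-1}(\exp(\rho)-\delta_1)$, and read off ergodicity/weak mixing, mixing and rigidity from the convolution identities $\widehat{\rho^{*n}}(k) = \widehat{\rho}(k)^{\,n}$. That much is correct and is what the paper does.

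The gap is in part (5), exactly where you anticipated trouble. Your preferred dynamical route rests on the claim that every factor of a Gaussian system is ``Gaussian over a sub-Gaussian space''; this is false in general --- it is the defining property of the special GAG class of Lema\'nczyk--Parreau--Thouvenot \cite{LPT}, and, for instance, the symmetric (even) factor of a typical Gaussian system is not of this form --- nor is such a statement in \cite{Qu}. Your fallback spectral sketch (``push a non-mildly-mixing $\theta \ll \exp(\rho)$ down to a rigidity sequence for $\rho$'') names the desired conclusion but supplies no mechanism: the whole difficulty is that $\theta$ may spread its mass over all chaos levels, so one must know both that convolving a mildly mixing measure with an arbitrary measure yields a mildly mixing measure, and that a norm-convergent countable sum of such measures, together with every measure absolutely continuous with respect to it, remains mildly mixing. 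The paper closes this by quoting the theorem of Host--M\'ela--Parreau \cite{HMP} that the set $\mathcal{L}_I$ of measures $\mu \in M(\T)$ for which $|\mu|/\|\mu\|$ is mildly mixing is a closed ($L$-)ideal of the convolution algebra $M(\T)$: then $\rho \in \mathcal{L}_I$ forces each $\rho^{*n} \in \mathcal{L}_I$ and hence $\eta \in \mathcal{L}_I$, while the converse direction is immediate from $\rho \ll \eta$ and the hereditary nature of the definition. Without this (or an equivalent) input, your part (5) is not proved.
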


\begin{proof}
Since as a collection of functions $X^*$ separates points on $X$ it follows
that the $\sig$-algebra of subsets of $X$ generated by $X^*$ coincides with
the Borel $\sig$-algebra $\mathcal{B}(X)$ (see e.g. \cite[Theorem 2.8.4]{Gl-03}).
Since $\mu$ is a Gaussian measure it follows that the automorphism $T$ of the measure space
$(X,\mathcal{B},\mu)$ together with the closed real subspace $H^r$
of $L_2(X,\mu)$ form a generalized Gauss automorphism of $(X,\mathcal{B},\mu)$.
The proofs of the assertions (2), (3) and (4) are now straightforward, as the spectral properties
in question are shared by $\rho$ and the probability measure 
$\eta = \frac{1}{e-1}(\exp(\rho) - \del_1)$,
which represents the maximal spectral
type of $U_T$ restricted to the subspace $L_2^0(\mu)$.
To see part (5) we recall that the collection $\mathcal{L}_I$ of (complex) measures 
$\mu \in M(\T)$ --- the convolution Banach algebra of complex measures on $\T$ ---
such that the probability measure $\frac{|\mu|}{\|\mu\|}$ is mildly
mixing, forms a closed ideal in $M(\T)$. Thus,
it follows that $\rho$ is mildly mixing iff $\eta = \frac{1}{e-1}(\exp(\rho) - \del_1)$ 
is mildly mixing. (For more details see \cite{HMP}.)
\end{proof}

\begin{rmk}
To the equivalent conditions in part (2) of the above theorem one can add the requirement that the representation on the first Wiener chaos $H$ be weakly mixing (see \cite[Theorem 3.59]{Gl-03}). 
In fact, ergodicity (i.e. the non-existence of nonzero invariant functions) on the first
chaos is also equivalent to weak mixing of $T$.
\end{rmk}


In the next theorem we deal with the situation described in \cite[Lemma 5.35]{BM}:
Let $X$ be a separable Banach space, $T \in \mathcal{L}(X)$, and let $\sig$ be
a probability measure on $\T$. Assume that $T$ admits a finite or
countable family of $\sig$-measurable bounded $\T$-eigenvector fields $(E_i)_{i\in I}$.
For each $i \in I$ let $K_i = K_{E_i} : L_2(\sigma) \to X$ be the operator defined by
$$
K_i(f) = \int_\T f(\la)E_i(\la)\, d\sig(\la).
$$
Let $\mathcal{H} = \oplus_{i \in I}L_2(\sig)$ and let 
$M  : \mathcal{H}  \to \mathcal{H}$, where $M : = \oplus_{i\in I} M_i$ and for each $i$,
$M_i : L_2(\sigma) \to L_2(\sigma)$ is the multiplication operator
$M_i(f)(\la) = \la f(\la)$.  
Let $K = \mathcal{H} \to X$ be defined by
$$
K(\oplus_i f_i) = \sum_{i} \al_iK_i(f_i),
$$ 
where $(\al_i)_{i \in I}$ is a family of positive numbers 
such that $\sum_i \al_i^2\|E_i\|^2_{L_2(\T,\sig,X)} < \infty$.
Finally set $R = K K^*$. (Note that the operators $K$ and $K^*$ here are not
the same as the ones used in Theorem \ref{rho}.)

We further assume that: (i) the family $(E_i)_{i \in I}$ is $\sig$-spanning,
(ii) each operator $K_{E_i} : L_2(\sig) \to X$ is $\gamma$-radonifying  
and (iii) $Ker(K) =0$. 
Then as in \cite[Proposition 5.36]{BM} we conclude
that there exists a $T$ invariant Gaussian probability measure $\mu$ fully supported
on $X$ whose covariance operator is $R$.

If $\sig$ is a continuous probability measure on $\T$ we define the probability 
measure $\check{\sig}$
on $\T$ by the formula $\check{\sig}(A) = \sig(\ol{A})$ for $A  \subset \T$.
The measure $\sig$ is {\em symmetric} when $\sig = \check{\sig}$.
We write $\sig^+ = 2 \sig \ch_{\T^+}$ and $\sig^- = 2\sig \ch_{\T^-}$,
where $\T^{+} = \{z \in \T : \Im{z} \ge 0\}$ and $\T^{-} = \{z \in \T : \Im{z} \le 0\}$.
Thus $\check{\sig^+} = \sig^-$ and $\sig  = \frac12 (\sig^+ + \sig^-)$.
Note that the map $f \mapsto \check{f}$ , where $\check{f}$ is the function $\check{f}(\lambda)=f(\overline{\lambda})$, defines a unitary equivalence between
$(L_2(\T,\sig),M)$ and $(L_2(\T,\check{\sig}),\check{M})$,
where $\check{M} f(\la) = \ol{\la} f(\la)$.

\begin{thm}\label{sig=rho}
Let $X$ be a separable Banach space, $T \in \mathcal{L}(X)$, $\sig$ 
a probability measure on $\T$ and $(E_i)_{i\in I}$ a family satisfying properties (i), (ii)
and (iii) as above. Let $\mu$ be the corresponding Gauusian measure on $X$.
Let $U_T : L_2(X,\mu) \to L_2(X,\mu)$ be the Koopman operator
associated to the measure dynamical system $(X,\mu,T)$.
As in Theorem \ref{rho} set $V = {\cls}{ K^* X^*}$, a closed $U_T$-invariant subspace
of $L_2(X,\mu)$. Then:
\begin{enumerate}
\item
$V$ is a complex Gaussian space of the second type. 
Denote $V^r = H^r = \{\Re{f} : f \in V\}$ and $H = H^r + i H^r = V + \overline{V}$, the corresponding complex Gaussian space of the first type. Then $H$ is the 
first Wiener chaos of $L_2(X,\mu)$. 
\item
The restriction of $U_T$ to $V$ is unitarily isomorphic to the unitary operator  
$M : \mathcal{H} \to \mathcal{H}$, hence
the restriction of $U_T$ to the first Weiner chaos $H =V \oplus \ol{V}$ is unitarily isomorphic to  
$M \oplus M : \mathcal{H}\oplus\check{\mathcal{H}} \to  \mathcal{H}\oplus\check{\mathcal{H}}$,
where $\check{\mathcal{H}} = \oplus_{i \in I}L_2(\check{\sig})$.
In particular the maximal spectral measure $\rho$ coincides with 
the symmetric measure $\rho = \frac12(\sig + \check{\sig})$.
Thus all the assertions of Theorem \ref{rho} hold for $\rho$.
\item
\footnote{We are indebted to B. Weiss for suggesting this trick}
If $\sig$ is symmetric and we apply part {\rm{(2)}} to the measure $\sig^+$ we obtain
the situation where $U_T \rest H$ is unitarily equivalent to
$M : \mathcal{H} \to \mathcal{H}$ with $\mathcal{H} = \oplus_{i \in I}L_2(\sig)$ 
and has maximal spectral type $\rho = \sig$ and multiplicity $\card I$.
\end{enumerate}

\end{thm}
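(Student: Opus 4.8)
The plan is to establish part (1) first, then deduce part (2) by exhibiting an explicit unitary equivalence, and finally derive part (3) as a special case.

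For part (1), I would argue as follows. The subspace $V = \cls K^*X^*$ is $U_T$-invariant because $U_T K^* = K^* T^*$, an identity which follows directly from the defining formula $K^*(x^*)(x) = \ol{\langle x^*, x\rangle}$ together with $T$-invariance of $\mu$: indeed $(U_T K^* x^*)(x) = \ol{\langle x^*, Tx\rangle} = \ol{\langle T^* x^*, x\rangle} = (K^* T^* x^*)(x)$. Every element of $V$ is an $L_2$-limit of linear combinations of the $\ol{\langle x^*, \cdot\rangle}$; since $\mu$ is Gaussian each such combination is a centered complex Gaussian variable, and the class of centered complex Gaussian variables is $L_2$-closed, so $V$ consists of centered complex Gaussians. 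To see $V$ is of the second type, I would use the description of $\mu$ via the eigenvector fields: because $K_i(f) = \int_\T f(\la) E_i(\la)\,d\sig(\la)$ with $E_i$ a $\T$-eigenvector field, multiplication by a unimodular scalar $\la_0$ on $L_2(\sig)$ is intertwined (via the rotation $\la \mapsto \la_0 \la$ on $\T$, using continuity of $\sig$ one reduces to the generic full-support case) with a transformation preserving the covariance structure; equivalently, the Gaussian variables built from $K^* X^*$ are symmetric in the sense of Theorem~\ref{type2}(1). Then Theorem~\ref{type2} gives that $V$ and $\ol V$ are orthogonal, and setting $H = V + \ol V = H^r + iH^r$ with $H^r = \Re V$ gives a first-type space by Theorem~\ref{type1}. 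That $H$ is the full first Wiener chaos is exactly the statement that the $\sig$-algebra generated by $H^r$ is all of $\mathcal{B}(X)$; this holds because $X^*$ separates points of $X$, hence generates $\mathcal{B}(X)$ (as cited from \cite{Gl-03} in the proof of Theorem~\ref{rho}), and $H^r$ generates the same $\sig$-algebra as $\{\Re\langle x^*,\cdot\rangle, \Im\langle x^*,\cdot\rangle\} = \{\langle x^*, \cdot\rangle_{\R}\}$. This is the same reasoning as in Theorem~\ref{rho}(1), so part (1) is really a matter of invoking that argument in the present more explicit setting.

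For part (2), the key is the map $J : \mathcal{H} \to L_2(X,\mu)$ defined on the generating vectors by sending $\oplus_i f_i \in \mathcal{H}$ to $K^*$ applied appropriately — concretely, one checks that the assignment $e_i \otimes f \mapsto$ (the Gaussian variable associated to $\al_i K_i f$ via the Gaussian structure) is a well-defined linear isometry of $\mathcal{H}$ onto $V$ that intertwines $M = \oplus M_i$ with $U_T \rest V$. The isometry property is the computation that the covariance pairing on $X^*$ induced by $R = KK^*$ matches the $\mathcal{H}$-inner product — this is precisely the content of the covariance identity $y^*(Rx^*) = \langle x^*, y^*\rangle_{L_2(\mu)}$ rewritten through the $K_i$'s — and the intertwining property is again $U_T K^* = K^* T^*$ combined with the fact that each $E_i$ is a $\T$-eigenvector field so that $T$ acts on the $i$-th coordinate as multiplication by $\la$. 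Given $U_T\rest V \cong M$ and $V$ of second type with $H = V \oplus \ol V$, the restriction $U_T \rest H$ is $M \oplus (M\rest\ol{V})$; and $U_T\rest \ol V$ is the complex conjugate of $U_T\rest V$, which under the identification $f \mapsto \check f$ noted before the theorem corresponds to $\check M$ on $\oplus_i L_2(\check\sig) = \check{\mathcal{H}}$. Hence $U_T \rest H \cong M \oplus M$ on $\mathcal{H} \oplus \check{\mathcal{H}}$. The maximal spectral type of $M$ on $L_2(\sig)$ is $\sig$ and of $\check M$ on $L_2(\check\sig)$ is $\check\sig$, so the maximal spectral type of $U_T\rest H$ is the lcm of $\sig$ and $\check\sig$, which for measure classes is $\frac12(\sig + \check\sig)$; this is the symmetric $\rho$, and Theorem~\ref{rho} then applies verbatim.

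For part (3), simply feed the symmetric measure $\sig$ into the construction by replacing $\sig$ with $\sig^+ = 2\sig\ch_{\T^+}$. Since $\sig$ is symmetric, $\frac12(\sig^+ + \widecheck{\sig^+}) = \frac12(\sig^+ + \sig^-) = \sig$, so by part (2) the resulting $U_T \rest H$ has maximal spectral type $\rho = \sig$; and because the two summands $\mathcal{H}$ (over $\sig^+$) and $\check{\mathcal{H}}$ (over $\sig^- = \widecheck{\sig^+}$) are now carried by the disjoint arcs $\T^+$ and $\T^-$, the combined operator on $H$ is unitarily equivalent to multiplication by $\la$ on $\oplus_{i\in I} L_2(\T, \sig)$ — i.e. to $M : \mathcal{H} \to \mathcal{H}$ with $\mathcal{H} = \oplus_{i\in I} L_2(\sig)$ — of uniform multiplicity $\card I$. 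I expect the main obstacle to be the careful verification in part (2) that $J$ is well-defined and isometric — that is, correctly bookkeeping the $\al_i$'s, the norms $\|E_i\|_{L_2(\T,\sig,X)}$, and the passage between the abstract Gaussian space over $\mathcal{H}$ and the concrete functions $\ol{\langle x^*,\cdot\rangle}$ — since everything else is a direct transcription of Theorem~\ref{rho} and of the standard dictionary between Gaussian spaces and their generating Hilbert spaces.
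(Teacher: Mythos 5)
Your overall route for parts (2) and (3) is exactly the paper's: the heart of the matter is the covariance computation
$\langle U_T K^*x^* , K^*y^* \rangle_{L_2(\mu)} = \langle RT^*x^*,y^* \rangle = \langle K M^* K^* x^*, y^* \rangle = \langle M^*K^* x^*, K^*y^* \rangle_{\mathcal{H}}$,
which, combined with the intertwining $TK=KM$ coming from the eigenvector-field property, shows that $x^*\mapsto K^*x^*$ carries $U_T\rest V$ unitarily onto $M$ acting on $\ol{K^*X^*}\subset\mathcal{H}$; conjugation then handles $\ol{V}$, and part (3) is the identification $L_2(\T,\sig^+)\oplus L_2(\T,\sig^-)\cong L_2(\T,\sig)$. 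Two points in your sketch need repair. First, you never say where hypothesis (iii) enters: $\ker K=0$ is precisely what makes $K^*X^*$ dense in $\mathcal{H}$, so that the equivalence is with $M$ on \emph{all} of $\mathcal{H}$ and not merely on a proper $M$-invariant subspace of the form $\ch_B L_2(\T,\sig)$ (compare the remark following the theorem); without this the asserted maximal spectral type and the multiplicity count in (3) could both fail. Second, your justification that $V$ is of the second type --- intertwining multiplication by a unimodular $\la_0$ with the rotation $\la\mapsto\la_0\la$ of $\T$ --- does not work as stated: that rotation pushes $\sig$ forward to a different measure and there is no reason it preserves the covariance operator $R$. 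The correct (and standard) mechanism is that $\mu$ is realized in \cite{BM} as the law of a random series $\sum_n g_n Ke_n$ with $(g_n)$ i.i.d.\ standard \emph{symmetric} complex Gaussians, whose joint law is invariant under $g_n\mapsto\la_0 g_n$; hence each $\langle x^*,\cdot\rangle$ is a symmetric complex Gaussian and Theorem \ref{type2}(1) applies. (The paper's printed proof leaves part (1) entirely to the references, so this is a repair of your sketch rather than a divergence from the paper; everything else in your proposal is a faithful, if more explicitly bookkept, version of the published argument.)
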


\begin{proof}
Using the notation of \cite[Lemma 5.35]{BM},
let $K_i = K_{E_i} : L_2(\sigma) \to X$ be the operators defined by
$$
K_i(f) = \int_\T f(\la)E_i(\la)\, d\sig(\la).
$$ 
Since we have $TK = KM$, where 
$K = \oplus_{i\in I} K_i : \oplus_{i\in I} L_2(\sig) \to X$,
 it follows by the uniqueness of the covariance operator that 
\begin{align*}
\langle U_T x^* , y^* \rangle_{L_2(\mu)} 
& =
\langle RT^*x^*,y^* \rangle\\
&  = \langle K(K^* T^*)x^*, y^* \rangle \\
&  = \langle  K^*y^*, M^*K^* x^* \rangle_{\mathcal{H}} \\
&  = \langle  M^*K^* x^*, K^*y^* \rangle_{\mathcal{H}^*}.
\end{align*}
Thus, the restriction of $U_T$ to $V = {\cls}{ K^* X^*} \subset L_2(X,\mu)$, is unitarily isomorphic to the unitary operator $M^*$ on $\mathcal{H}^*$, hence also to $M$ on 
$\mathcal{H}$. Now $U_T\rest \ol{V}$ is unitarily equivalent to $M: \check{\mathcal{H}} \to \check{\mathcal{H}}$, and 
we conclude that indeed the restriction of $U_T$ to the first Weiner chaos $H =V \oplus \ol{V}$ is unitarily isomorphic to  
$M \oplus M : \mathcal{H}\oplus\check{\mathcal{H}} \to  \mathcal{H}\oplus\check{\mathcal{H}}$.
Clearly the maximal spectral type of $M \oplus  M$ acting on $\mathcal{H}\oplus\check{\mathcal{H}}$  is represented by the symmetric measure 
$\frac12 (\sig + \check{\sig})$, and we can take $\rho = \frac12 (\sig + \check{\sig})$.
Note that when $\sigma$ is symmetric, $\rho =\sigma = \frac12 (\sig + \check{\sig})$.
Part (3) follows because $L_2(\T,\sig^+) \oplus L_2(\T,\sig^-) 
\cong L_2(\T,\sig)$.
\end{proof}


\begin{rmk}
The condition that each operator $K_{E_i}$ be one-to-one (which is implied by the condition
$Ker(K)=0$) is introduced in order to simplify the formulation of the theorem. 
When it is omitted we need to replace the corresponding
$L_2(\sig)$ by the subspace $Ker(K_{E_i})^{\perp}$ (see the second bulleted remark on 
page 101 of \cite{BM}). This amounts to replacing (in the context of \cite[Lemma 5.35]{BM})
$L_2(\sig)$ by $L_2(\sig_i)$ with $\sig_i \ll \sig$. In fact, by a well known theorem of Wiener
a closed subspace $F \subset L_2(\T,\sig)$ which is invariant under $M$ is of the form
$$
F = \ch_B L_2(\T,\sig) = \{f \in L_2(\T,\sig) : f = 0\  {\text{on}} \ B^c\},
$$
for some Borel subset $B$ of $\T$.
\end{rmk}

\begin{rmk}
We refer to Section 5.6 of \cite{BM} for some natural conditions on the Banach space
$X$ which ensure that the complicated conditions of Theorem \ref{sig=rho} are automatically 
satisfied. For example this is the case when $X$ has type 2 and $T$ admits a perfectly spanning 
set of $\T$-eigenvectors (i.e. $\sig$-spanning with respect to some continuous $\sig$), 
\cite[Theorem 5.38]{BM}. 
And, conversely if $X$ has cotype 2 and it admits a Gaussian invariant measure with full support with respect to which $T$ is weakly mixing, then the $\T$-eigenvectors of $T$ are perfectly spanning  \cite[Theorem 5.46]{BM}. 
Thus, when $X$ is a Hilbert space the condition ``$T$ admits
a perfectly spanning set of $\T$-eigenvectors" is necessary and sufficient 
for $T$ to preserve a fully supported Gaussian measure with respect to which
$T$ is weakly mixing.
\end{rmk}



For the next two corollaries we consider the {\em Kalish construction}, as described e.g.
in \cite[Section 2.2]{Gr-12}. Thus $\sig$ is a continuous probability 
measure on $\T$ and we denote its closed support by $L \subset \T$. 
On the Hilbert space $L_2(\T,d\la)$, where $d\la =  ie^{it}dt$, with $dt$ being Lebesgue measure 
on $[0,2\pi]$,
we define the invertible operator $T = M - J$, where
$Mf(\zeta) = \zeta f(\zeta)$ and $Jf(\zeta) = \int_{(1,\zeta)} f(\la) \,d\la$.
Let $\chi_\zeta$ denote the characteristic function of the arc $(\zeta,1)$. 
Then for each $\la \in \T$ the function $\chi_\la$ satisfies $T \chi_\la = \la \chi_\la$.
Let $H_L$ be the closed subspace of $L_2(\T,d\la)$ spanned by the 
collection $\{\chi_\la : \la \in L\}$. Let $E : L \to H_L$ be defined by
$E(\la) = \chi_\la$, and let $K : L_2(\sig) \to H_L$ be the map
$$
f \mapsto \int_L f(\la)E(\la)\, d\sig(\la).
$$ 


\begin{lem}\label{iii}
The $\T$-eigenvector field $E$ and the corresponding operator $K$ satisfy
the properties (i), (ii) and (iii) (see the paragraph preceding Theorem \ref{sig=rho}).
\end{lem}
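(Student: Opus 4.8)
The plan is to verify the three properties directly from the explicit description of the Kalish construction, handling (iii) and (i) together since they both rest on the same linear-independence fact about the eigenfunctions $\chi_\la$, and then treating the $\gamma$-radonifying property (ii) separately as the analytic heart of the argument. I would first record the key structural observation: for finitely many distinct points $\la_1,\dots,\la_n \in \T$, the characteristic functions $\chi_{\la_1},\dots,\chi_{\la_n}$ of the arcs $(\la_j,1)$ are linearly independent in $L_2(\T,d\la)$, since they are step functions with distinct jump points. More is true and will be needed: if a finite linear combination $\sum_j c_j \chi_{\la_j}$ vanishes $d\sig$-a.e. on $L$ (equivalently, as an element obtained by integrating against $d\sig$), then because $\sig$ is continuous and $L$ is its closed support, one can localize near each $\la_j$ and conclude $c_j=0$. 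This is the mechanism behind both the injectivity of $K$ and the $\sig$-spanning property.

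For property (iii), $Ker(K)=0$: suppose $Kf=0$ for some $f\in L_2(\sig)$, i.e. $\int_L f(\la)\chi_\la\,d\sig(\la)=0$ in $L_2(\T,d\la)$. I would pair this against a test function supported on a small arc to extract, for Lebesgue-a.e. $\zeta\in\T$, the identity $\int_{L\cap(\zeta,1)}f(\la)\,d\sig(\la)=0$ (using $\chi_\la(\zeta)=\mathbf{1}_{(\zeta,1)}(\la)$ up to orientation bookkeeping). Differentiating this in $\zeta$ — which is legitimate because $\sig$ is continuous so the indefinite integral is continuous and absolutely continuous with respect to $\sig$ — forces $f=0$ $\sig$-a.e. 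For property (i), that $(E_i)$ (here a single field $E$) is $\sig$-spanning means $\cls\{\chi_\la:\la\in L\} = H_L$ by definition, which is immediate, together with the requirement that no proper $M$-invariant subspace carries the whole spectral picture; concretely one checks that the cyclic space generated is all of $H_L$ and that $\sig$ is a maximal spectral type, which again follows from the linear independence above applied through the Wiener-type description of $M$-invariant subspaces recalled in the remark after Theorem \ref{sig=rho}.

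The main obstacle is property (ii): that $K_E=K:L_2(\sig)\to H_L\subset L_2(\T,d\la)$ is $\gamma$-radonifying, i.e. that the associated Gaussian series $\sum_n g_n K(e_n)$ converges in $L_2(\T,d\la)$ for an orthonormal basis $(e_n)$ of $L_2(\sig)$ and i.i.d. standard Gaussians $(g_n)$. Since $L_2(\T,d\la)$ is a Hilbert space, $\gamma$-radonifying is equivalent to Hilbert–Schmidt, so it suffices to prove $\sum_n \|K(e_n)\|_{L_2(\T,d\la)}^2 = \|K\|_{HS}^2 <\infty$. I would compute this trace as $\int_L \|\chi_\la\|_{L_2(\T,d\la)}^2\,d\sig(\la)$ after expanding $K(e_n) = \int_L e_n(\la)\chi_\la\,d\sig(\la)$ and using Parseval in $L_2(\sig)$; explicitly $\|K\|_{HS}^2 = \int_L\!\!\int_L \langle\chi_\la,\chi_{\la'}\rangle_{L_2(\T,d\la)}\,e_n(\la)\overline{e_n(\la')}$ summed over $n$ collapses to $\int_L \|\chi_\la\|^2\,d\sig(\la)$, and since each $\|\chi_\la\|_{L_2(\T,d\la)}^2$ is just the (Lebesgue) length of the arc $(\la,1)$, hence bounded by $2\pi$, and $\sig$ is a probability measure, the integral is finite. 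This identifies $K$ as Hilbert–Schmidt, hence $\gamma$-radonifying, completing the verification. I would close by noting that this is precisely the point where the Hilbert-space setting of the Kalish construction makes condition (ii) automatic, in line with the type-2 remark following Theorem \ref{sig=rho}.
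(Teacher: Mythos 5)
Your proposal is correct and follows essentially the same route as the paper: (i) and (ii) are dispatched using the Hilbert-space setting and the continuity/boundedness of $E$ (your explicit Hilbert--Schmidt computation for (ii) just fills in what the paper leaves as a one-line remark), and the substantive point, $Ker(K)=0$, is proved exactly as in the paper by testing $K(f)$ against the functionals $x^*_g$, reducing to the a.e.\ vanishing of $\zeta \mapsto \int_{L\cap(\zeta,1)} f\,d\sig$ and hence of $f$. One small correction to your treatment of (i): $\sig$-spanning is not the statement that ${\cls}\,{\rm span}\{E(\la):\la\in L\}=H_L$ (which is true by definition of $H_L$), but that ${\cls}\,{\rm span}\{E(\la):\la\in A\}=H_L$ for \emph{every} Borel set $A$ of full $\sig$-measure; this is where the continuity of $E$ together with the continuity of $\sig$ (so that any such $A$ is dense in the support $L$) is genuinely used, and your appeal to Wiener's theorem is not needed.
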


\begin{proof}
Since $H_L$ is a Hilbert space, and since $E$ is a continuous $\T$-eigenvector field 
we only need to prove (iii); namely that $Ker(K) =0$. 
Suppose $K(f) = 0$ for some $f \in L_2(\sig)$; i.e. the function
$\int_L f(\la)E(\la)\, d\sig(\la)$ is the zero function in $H_L$. This implies that
for every $x^* \in H^*_L$ the numerical integral $\int_L f(\la)x^*(E(\la))\, d\sig(\la) = 0$.
The general $x^* \in H^*_L$ has the form $x^*_g$, 
where $g \in L_2(\T,d\la)$ and for $h \in H^*_L$,
$x^*_g(h) = \int_\T \ol{g(\zeta)}h(\zeta)\, d\zeta$.
Thus we have 
$$
\int_L f(\la) \left(\int_\T \ol{g(\zeta)} \chi_\la(\zeta))\, d\zeta \right)\, d\sig(\la) = 0,
$$
for every $g \in L_2(\T,d\la)$.
This clearly implies that $f =0 $ in $L_2(\sig)$ and our proof is complete.
\end{proof}

\begin{cor}\label{mild}
There exist a separable Hilbert space $X$,
$T \in \mathcal{L}(X)$, and a $T$-invariant
Gaussian measure $\mu$ on $X$ with full support such that the corresponding
measure preserving system $\Xb = (X,\mathcal{B},\mu,T)$ is mildly but not strongly mixing.
\end{cor}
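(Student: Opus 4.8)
The plan is to run the Kalish construction on a carefully chosen measure and then read everything off from Theorems~\ref{rho} and \ref{sig=rho}. Concretely, I would begin by fixing a \emph{continuous, symmetric} probability measure $\sig$ on $\T$ which is mildly mixing but is \emph{not} a Rajchman measure; the existence of such a $\sig$ is the one substantive point and is taken up at the end. Granting it, let $L=\supp(\sig)\subset\T$ and form the operator $T=M-J$ on $L_2(\T,d\la)$, the closed subspace $H_L$ spanned by $\{\chi_\la:\la\in L\}$, the $\T$-eigenvector field $E(\la)=\chi_\la$, and the map $K:L_2(\sig)\to H_L$, exactly as in the paragraph preceding Lemma~\ref{iii}. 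By Lemma~\ref{iii} the pair $(E,K)$ satisfies (i), (ii) and (iii), so, applying the discussion preceding Theorem~\ref{sig=rho} with index set $I$ a single point, there is a $T$-invariant Gaussian measure $\mu$ with full support on $X:=H_L$ whose covariance operator is $R=KK^*$. Since $\sig$ is symmetric, Theorem~\ref{sig=rho}(2) gives that the maximal spectral type of $U_T\rest H$ is $\rho=\tfrac12(\sig+\check{\sig})=\sig$, and that all the assertions of Theorem~\ref{rho} hold for this $\rho$.

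Next I would invoke that dictionary. Since $\rho=\sig$ is mildly mixing, Theorem~\ref{rho}(5) gives that $\Xb=(X,\mathcal{B},\mu,T)$ is mildly mixing; since $\rho=\sig$ is not a Rajchman measure, Theorem~\ref{rho}(3) gives that $\Xb$ is not (strongly) mixing. This is precisely the assertion, once we note that $X=H_L$, being a closed subspace of the separable Hilbert space $L_2(\T,d\la)$, is itself a separable Hilbert space, and that $T\rest H_L$ is a bounded invertible operator on it: indeed the vectors $\chi_\la$ ($\la\in L$) span $H_L$ and $T^{\pm1}\chi_\la=\la^{\pm1}\chi_\la\in H_L$, so $T(H_L)=H_L$ and $T\rest H_L$ is a topological isomorphism of $H_L$. (As a bonus, $\sig$ continuous forces $\rho$ continuous, so by Theorem~\ref{rho}(2) the system is weakly mixing; being fully supported on a Polish space it is topologically transitive, i.e. $T\rest H_L$ is in fact a hypercyclic operator.)

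The remaining, and essential, step is the choice of $\sig$: a continuous symmetric probability measure on $\T$ with $0<\limsup_{|n|\to\infty}|\hat\sig(n)|<1$ that is moreover mildly mixing, meaning that \emph{every} probability measure $\theta\ll\sig$ still satisfies $\limsup_{|n|\to\infty}|\hat\theta(n)|<1$. The delicate point, and the one I expect to be the main obstacle, is that $\sig$ must carry no rigid component even up to absolute continuity (a convex combination of a Rajchman measure with a Dirichlet measure fails, since the Dirichlet summand is absolutely continuous with respect to the combination), so one needs a genuinely ``homogeneous'' non-Rajchman measure. A Riesz product supplies one: take $\sig=\prod_{k=1}^{\infty}\bigl(1+\tfrac12\cos(n_k t)\bigr)$ with $\{n_k\}$ lacunary and $n_{k+1}/n_k\to\infty$ (e.g. $n_k=3^{2^k}$). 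This $\sig$ is a continuous probability measure, it is symmetric because its Fourier coefficients are real, and $\hat\sig(n_k)=\tfrac14\not\to0$, so it is not Rajchman; that it is mildly mixing follows from the analysis of Riesz products in \cite{HMP}, since a Riesz product with $\inf_k(1-|a_k|)>0$ is not rigid and a non-rigid Riesz product is mildly mixing. (Equivalently, one may simply quote from \cite{HMP} the existence of a continuous symmetric mildly mixing measure that is not a Rajchman measure; checking these properties for the chosen $\sig$ is where the real work lies, the rest of the argument being mechanical.)
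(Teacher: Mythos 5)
Your argument is correct and follows the paper's own proof essentially verbatim: choose a continuous symmetric mildly mixing non-Rajchman $\sigma$, run the Kalish construction, invoke Lemma~\ref{iii} so that Theorem~\ref{sig=rho}(2) identifies the maximal spectral type of $U_T\restriction H$ as $\rho=\tfrac12(\sigma+\check{\sigma})=\sigma$, and conclude via Theorem~\ref{rho}(3) and (5). The only addition is your explicit Riesz-product candidate for $\sigma$ (the paper simply cites \cite{HMP} for the existence of such a measure, which is also your stated fallback); if you keep the explicit example, the dichotomy ``a non-rigid dissociate Riesz product is mildly mixing'' should be verified against \cite{HMP} rather than merely asserted.
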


\begin{proof}
Start with a continuous symmetric probability measure $\sig$ on $\T$ which is 
mildly mixing but not Rajchman. (For the existence of such measures
see e.g. \cite{HMP}. Alternatively one can start with any measure theoretically 
mildly but not strongly mixing dynamical system $(Y,\mathcal{B},\nu,S)$ and then let 
$\sig = \sig_{U_S}$.)
Follow the {\em Kalish construction}, with the measure $\sig$,
to obtain a linear operator $T$ in $\mathcal{L}(X)$,
on a separable Hilbert space $X$, and a $T$-invariant fully supported Gaussian probability 
measure $\mu$ on $X$. By Lemma \ref{iii} Theorem \ref{sig=rho} (2) applies
and we know that the maximal spectral type of $U_T \rest H$ 
(with $H^r =  V^r,\  V={\cls}{K^* X^*}$) is $\sig=\frac12 (\sig + \check{\sig})$. 
Thus, by Theorem \ref{rho}, the maximal spectral type of $U_T$
is $\exp(\sig)$. By parts (3) and (5) of this theorem we conclude that the system
$\bX   = (X,\mathcal{B},\mu,T)$ is indeed mildly but not strongly mixing.
\end{proof}

In the general construction described in Theorem \ref{rho} there is, a priori, no
reason for the Gauss automorphism $T$ to be standard. 
We next address the question: 
when is the space $H$ cyclic for $U$ ?

\begin{cor}
Given a symmetric, continuous probability measure $\sig$ on $\T$,
there exist a Hilbert space $X$,  $T \in \mathcal{L}(X)$ and  
a $T$-invariant Gaussian measure $\mu$ on $X$ with full support, such that the corresponding 
unitary operator $U$ on $H$ is cyclic and has simple spectrum with maximal spectral type 
$\sig$. 
\end{cor}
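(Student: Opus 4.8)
The plan is to run the \emph{Kalish construction} not with $\sig$ itself but with the auxiliary measure $\sig^+ = 2\sig\ch_{\T^+}$, exploiting the trick recorded in Theorem~\ref{sig=rho}(3). Since $\sig$ is continuous and symmetric we have $\sig(\T^+)=\sig(\T^-)=\tfrac12$, so $\sig^+$ is again a continuous probability measure on $\T$, supported on $L^+ := \supp(\sig^+)\subset\T^+$.

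First I would feed $\sig^+$ into the Kalish construction (with $L^+$ in place of $L$): this produces the separable Hilbert space $X = H_{L^+}\subset L_2(\T,d\la)$, the invertible operator $T = M-J\in\mathcal{L}(X)$, the $\T$-eigenvector field $E(\la)=\chi_\la$, and the operator $K : L_2(\sig^+)\to X$, $f\mapsto \int_{L^+} f(\la)E(\la)\,d\sig^+(\la)$. By Lemma~\ref{iii} the pair $(E,K)$ satisfies properties (i), (ii) and (iii), so by the discussion preceding Theorem~\ref{sig=rho} (i.e.\ \cite[Proposition 5.36]{BM}) there is a $T$-invariant Gaussian measure $\mu$ on $X$ with full support whose covariance operator is $R=KK^*$.

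Next I would apply Theorem~\ref{sig=rho}(2), but with $\sig^+$ playing the role of $\sig$. The crucial point is that the Kalish family $(E_i)_{i\in I}$ consists of the \emph{single} eigenvector field $E$, so $I$ is a singleton and $\mathcal{H}=L_2(\sig^+)$. Theorem~\ref{sig=rho}(2) then gives that $U_T\rest V$ is unitarily equivalent to $M$ on $L_2(\sig^+)$ and that $U=U_T\rest H$ is unitarily equivalent to $M\oplus M$ on $L_2(\T,\sig^+)\oplus L_2(\T,\check{\sig^+}) = L_2(\T,\sig^+)\oplus L_2(\T,\sig^-)$, with maximal spectral type $\rho = \tfrac12(\sig^+ + \sig^-) = \sig$. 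Now $\T^+\cup\T^- = \T$ and $\sig(\T^+\cap\T^-)=\sig(\{1,-1\})=0$, so the map $(f,g)\mapsto \sqrt2\,(f\,\ch_{\T^+} + g\,\ch_{\T^-})$ is a unitary isomorphism of $L_2(\T,\sig^+)\oplus L_2(\T,\sig^-)$ onto $L_2(\T,\sig)$ intertwining $M\oplus M$ with multiplication by $\la$ on $L_2(\T,\sig)$. Since the constant function $\ch$ is a cyclic vector for the latter operator (its cyclic span contains all trigonometric polynomials, which are dense in $L_2(\T,\sig)$) and has spectral measure $\sig$ (because $\langle M^n\ch,\ch\rangle = \int_\T \la^n\,d\sig(\la)$), we conclude that $U$ is cyclic, has simple spectrum, and has maximal spectral type $\sig$, as claimed; equivalently, $T$ together with $H$ defines a standard Gaussian automorphism.

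The step I expect to be the only real subtlety is the choice of the auxiliary measure: running Kalish directly with $\sig$ would yield $U_T\rest H\cong M\oplus\check M$ on two copies of $L_2(\T,\sig)$ (here $\check\sig = \sig$ by symmetry), hence multiplicity two and not simple spectrum; it is precisely because $\check{\sig^+}=\sig^-$ lives on the complementary half-circle, so that $\sig^+$ and $\sig^-$ are mutually singular, that the two summands $V$ and $\ol V$ assemble into a single cyclic space. Everything else is bookkeeping with the results of the previous sections.
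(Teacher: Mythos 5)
Your proof is correct and follows essentially the same route as the paper: run the Kalish construction and invoke the $\sig^+$ trick of Theorem~\ref{sig=rho}(3) to collapse $V\oplus\ol{V}$ onto a single copy of $L_2(\T,\sig)$ with the multiplication operator, which has simple spectrum of type $\sig$. You merely unwind part (3) explicitly (the unitary $(f,g)\mapsto\sqrt2\,(f\ch_{\T^+}+g\ch_{\T^-})$ and the mutual singularity of $\sig^+$ and $\sig^-$), and you are in fact slightly more careful than the paper in insisting that the construction be carried out over $L^+=\supp(\sig^+)$ so that the single eigenvector field is $\sig^+$-spanning.
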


\begin{proof}
Apply the Kalish construction
to obtain a Hilbert space $X$, a hypercyclic operator
$T\in \mathcal{L}(X)$, and a fully supported, $T$-invariant, Gaussian, probability measure 
$\mu$ on $X$. 
Let $E : \T \to X$ be the Kalish continuous and spanning $\T$-eigenvector field,
and let $K = K_E : L_2(\sigma) \to X$ be the corresponding operator, defined by
$$
K(f) = \int_\T f(\la)E(\la)\, d\sig(\la).
$$ 
Finally with $\mathcal{H} = L_2(\T,\sig)$ let $M : \mathcal{H}  \to \mathcal{H} $ be the 
multiplication operator $M(f)(\la) = \la f(\la)$. Of course the operator $M$ has simple spectrum
with maximal spectral type $\sig$. 
An application of Theorem \ref{sig=rho} (3) completes the proof.

\end{proof}

\begin{rmk}
It is perhaps not unreasonable to surmise that such a ``linear model"
for a standard Gaussian stochastic process, where the probability space
is a Hilbert space, the transformation is a linear map, and the 
random variables are linear functionals, may become
useful in other fields where Gaussian processes are being used.
\end{rmk}

Relying on an intricate construction of Eisner and Grivaux \cite{EG}
one can deduce the existence of a hypercyclic operator on a separable Hilbert space which
is weakly but not mildly mixing, both in the measure theoretical and the
topological sense. (For the definition of topological mild mixing see e.g.
\cite{GW2}.)

\begin{prop}
\begin{enumerate}
\item
(\cite[Theorem 4.1]{EG}) There exists a hypercyclic operator $T$ on a Hilbert space $H$
and a fully supported $T$-invariant Gaussian measure $\mu$ such that the
corresponding measure preserving system is weakly mixing and rigid, hence not
mildly mixing.
\item
(\cite[Theorem 1.13]{EG})
There exists a hypercyclic operator $T$ on a Hilbert space $H$
and a fully supported $T$-invariant Gaussian measure $\mu$ such that the
corresponding measure preserving system is weakly mixing, and such
that for some sequence $n_k \nearrow \infty$ we have $\| T^{n_k} - I \| \to 0$.
These facts show that the Polish dynamical system $(H,T)$ is topologically
weakly but not mildly mixing.
\end{enumerate}
\end{prop}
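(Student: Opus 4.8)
The plan is to \emph{invoke} the two existence theorems of Eisner and Grivaux \cite{EG} and then supply the short arguments that translate them into the assertions made here; no new construction is required. \textbf{Part (1).} By \cite[Theorem 4.1]{EG} there is a hypercyclic operator $T$ on a separable Hilbert space $H$ carrying a fully supported $T$-invariant Gaussian measure $\mu$ for which $\bX=(H,\mathcal{B},\mu,T)$ is weakly mixing and rigid. What remains is the elementary observation that a rigid system is never mildly mixing: if $\mu(T^{n_k}A\tri A)\to 0$ for every $A$, then $U_T^{n_k}\to I$ strongly on $L_2(\mu)$, so for any unit vector $f\in L_2^0(\mu)$ one has $\hat{\sig}_f(n_k)=\langle U_T^{n_k}f,f\rangle\to 1$, contradicting the spectral condition $\limsup_{|n|\to\infty}|\hat{\sig}_f(n)|<1$. (Alternatively, via Theorem \ref{rho}: weak mixing forces $\rho$ to be continuous and rigidity forces $\rho$ to be a Dirichlet measure with $\hat{\rho}(n_k)\to 1$; taking $\theta=\rho$ gives $\limsup|\hat{\rho}(n)|=1$, so $\rho$ is not a mildly mixing measure, and one concludes by Theorem \ref{rho}(5).)

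\textbf{Part (2).} By \cite[Theorem 1.13]{EG} there is a hypercyclic operator $T$ on a separable Hilbert space $H$ with a fully supported $T$-invariant Gaussian measure $\mu$ such that $(H,\mathcal{B},\mu,T)$ is weakly mixing and $\|T^{n_k}-I\|\to 0$ for some $n_k\nearrow\infty$. For topological weak mixing of $(H,T)$: weak mixing of the measure system means $\mu\times\mu$ is $(T\times T)$-ergodic, and it has full support on $H\times H$ because $\mu$ has full support on $H$; hence $\mu\times\mu$-a.e.\ point of $H\times H$ has dense $(T\times T)$-orbit, so $(H\times H,T\times T)$ is topologically transitive, i.e.\ $(H,T)$ is topologically weakly mixing. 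For the failure of topological mild mixing: $\|T^{n_k}-I\|\to 0$ says $(H,T)$ is uniformly rigid, and a nontrivial uniformly rigid system is not topologically mildly mixing (cf.\ \cite{GW2}). One way to see this: pass to a subsequence with $\|T^{n_{k_j}}-I\|\le 4^{-j}\ep$; using $\|T^{a+b}-I\|\le\|T^a-I\|\,\|T^b\|+\|T^b-I\|$ together with $\|T^b\|\le 1+\|T^b-I\|$, an induction on the number of summands gives $\|T^m-I\|<\ep$ for every $m$ in the $\mathrm{IP}$-set $F$ consisting of finite sums of distinct $n_{k_j}$. Thus every such $T^m$ moves bounded subsets of $H$ only $O(\ep)$ in operator norm; picking a transitive point $x_0$ and setting $U=B(x_0,\del)$, $V=B(T^Nx_0,\del)$ with $\|T^Nx_0-x_0\|$ large and $\ep,\del$ small, we obtain $N(U,V)\cap F=\emptyset$, so $N(U,V)$ is not an $\mathrm{IP}^*$ set; since topological mild mixing forces $N(U,V)$ to be $\mathrm{IP}^*$ for all nonempty open $U,V$, the system is not topologically mildly mixing.

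The translations in both parts are routine; the delicate point is the last step of part (2) — verifying that uniform rigidity obstructs topological mild mixing in the non-compact Polish category. There one must keep the operator $T$, which need not be power-bounded, uniformly close to the identity along the entire $\mathrm{IP}$-set of return times, and appeal to the $\mathrm{IP}^*$-description of topological mild mixing from \cite{GW2}.
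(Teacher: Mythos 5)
Your proposal is correct and follows exactly the route the paper takes: parts (1) and (2) are quoted from \cite{EG}, and the only item the paper actually proves is the final sentence of part (2), for which it says only that the argument is ``a straightforward analogue of the proof of Lemma 1.14 in \cite{GW2}'' and leaves it to the reader --- the submultiplicative estimate $\|T^{a+b}-I\|\le\|T^a-I\|\,\|T^b\|+\|T^b-I\|$ along an $\mathrm{IP}$-set of return times that you spell out is precisely that analogue, and your derivations of ``rigid $\Rightarrow$ not mildly mixing'' and of topological weak mixing from the fully supported ergodic product measure are the standard ones. The single point worth double-checking is the combinatorial characterization you invoke (that topological mild mixing forces every $N(U,V)$ to be an $\mathrm{IP}^*$-set); if you want to avoid relying on its exact form, observe that your estimate also keeps $T^{a-b}$ uniformly close to $I$ for $a,b$ in the $\mathrm{IP}$-set $F$ (via a Neumann series bound on $\|T^{-b}\|$), so $N(U,V)$ in fact misses the whole positive part of $(F\cup\{0\})-(F\cup\{0\})$, and the orbit closure of the indicator function of $F$ in the full shift is then a transitive system whose product with $(H,T)$ is not transitive, contradicting the definition of topological mild mixing directly.
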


\begin{proof}
The last sentence is the only claim which needs a proof. This is 
a straightforward analogue of the proof of Lemma 1.14 in \cite{GW2} and we 
leave it to the reader. 
\end{proof}

\section{Upper and lower frequently hypercyclic operators}

In \cite[Proposition 6.23]{BM} it is shown that if there is 
on $X$ a $T$-invariant probability measure $\mu$ with full support 
with respect to which $T$ is ergodic, then $T$ is frequently hypercyclic.

Motivated by the theorem below we propose the following definitions.

\begin{defn}
Let $(X,T)$ be a Polish dynamical system. We say that it is {\em upper-frequently
transitive} (UFT for short) if there is a point $x_0 \in X$ such that for every nonempty
open subset $U \subset X$ we have
$$
ud (N(x_0,U)) = \limsup_{N \to \infty} \frac
{\card(N(x_0,U) \cap [1,N])}{N} > 0.
$$  
Similarly, using 
$$
ld (N(x_0,U)) = \liminf_{N \to \infty} \frac
{\card(N(x_0,U) \cap [1,N])}{N} > 0.
$$ 
instead of $\limsup$ in the above definition, we obtain the notion
of a {\em lower-frequently transitive} (LFT for short) Polish dynamical system.
Of course LFT implies UFT.
\end{defn}

\begin{defn}
Let $(X,T)$ be a Polish dynamical system. We say that a compact metric dynamical system
$(\tilde{X},\tilde{T})$ is a {\em dynamical compactification} of $(X,T)$ if 
there is an equivariant homeomorphism $J : X \to \tilde{X}$ 
(i.e. $\tilde{T} \circ J = J \circ T$ on $X$) with dense image.
It then follows that $J(X)$ is a dense $G_\delta$ subset of $\tilde{X}$.
Let us recall that to every dynamical compactification $J : X \to \tilde{X}$ corresponds
a unique separable unital $C^*$-algebra $\mathcal{A} \subset C_b(X,\C)$ such that
the map $J^* : \mathcal{A} \to C(\tilde{X},\C)$ defined by
$$
J^*f(x) = \tilde{f}(Jx) = f(x), \quad {\text{for every $f \in \mathcal{A}$ and every $x \in X$}},
$$
is a surjective isomorphism of the corresponding $C^*$-algebras.
Conversely, with every separable unital $C^*$-subalgebra $\mathcal{A}$ of $C_b(X,\C)$ 
which separates points and closed subsets of $X$ there is an associated
dynamical compactification $J : X \to \tilde{X}$, where $\tilde{X}$ is the compact metrizable  Gelfand space which corresponds to $\mathcal{A}$.
In the sequel we will sometimes suppress the map $J$ and consider $X$ as a subset of $\tilde{X}$.
\end{defn}

\begin{defn}
Let $(X,T)$ be a compact metric dynamical system and $\mu$ a probability measure
on $X$.  A point $x_0 \in X$ is a {\em generic point} for $\mu$ if 
$\lim_{n \to \infty}\mu_n = \mu$ in the weak$^*$ topology,  where
$$
\mu_n = \frac{1}{n} \sum_{j=1}^n \delta_{T^j x_0}.
$$
The point $x_0$ is {\em quasi-generic} for $\mu$ if for some subsequence 
$\lim_{k \to \infty}\mu_{n_k} = \mu$ in the weak$^*$ topology.
Clearly a measure which admits a quasi-generic point is necessarily $T$-invariant.
However, it need not be ergodic even when it admits a generic point.
\end{defn}

\begin{thm}\label{quasigeneric}
Let $(X,T)$ be a Polish dynamical system.  
\begin{enumerate}
\item
If $T$ is lower-frequently transitive
then for every metric compact dynamical compactification 
there exists on $\tilde{X}$ a $\tilde{T}$-invariant probability measure $\mu$ with 
$\supp(\mu)=\tilde{X}$, 
which moreover admits a quasi-generic point in $X$.
\item
Conversely, if $(X,T)$ admits a dynamical compactification $(\tilde{X},\tilde{T})$
such that on $\tilde{X}$ there is a $\tilde{T}$-invariant probability
measure $\mu$ with $\supp(\mu)=\tilde{X}$, which moreover admits a 
quasi-generic point in $X$, then $(X,T)$ is upper-frequently transitive.
\end{enumerate} 
\end{thm}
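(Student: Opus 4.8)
The two directions have rather different flavors. For part (1), the plan is to fix a point $x_0 \in X$ witnessing lower-frequent transitivity, and to work inside a dynamical compactification $\tilde{X}$, which is compact metrizable. Consider the sequence of empirical measures $\mu_n = \frac{1}{n}\sum_{j=1}^n \delta_{\tilde{T}^j J(x_0)}$ on $\tilde{X}$. By weak$^*$ compactness of $M(\tilde{X})$, some subsequence $\mu_{n_k}$ converges to a probability measure $\mu$, which is automatically $\tilde{T}$-invariant, and $J(x_0)$ is by construction quasi-generic for $\mu$. The only nontrivial point is that $\supp(\mu) = \tilde{X}$. Here is where the LFT hypothesis enters: given a nonempty open $U \subset X$, we have $ld(N(x_0,U)) > 0$, so along any subsequence the density of visits to $U$ stays bounded below by some $c > 0$; picking a slightly smaller open set whose closure (in $\tilde X$) lies in the preimage-image of $U$, one gets $\mu(\bar{V}) \geq c > 0$ for the corresponding open $V \subset \tilde X$, using the standard lower semicontinuity/portmanteau estimate $\limsup_k \mu_{n_k}(F) \le \mu(F)$ is the wrong direction, so one instead uses: for a continuous $f$ with $\mathbf{1}_V \le f \le \mathbf{1}_{\bar U}$, $\int f\,d\mu = \lim_k \int f\,d\mu_{n_k} \ge \liminf_k \mu_{n_k}(V) \ge c$. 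Since the sets $J(U)$ for $U$ open in $X$ generate the topology of $\tilde X$ on the dense set $J(X)$, and more carefully since $J(X)$ is dense $G_\delta$, one concludes $\mu$ gives positive mass to every nonempty open subset of $\tilde X$, i.e. $\supp(\mu) = \tilde X$.

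For part (2), suppose $\mu$ is a fully supported $\tilde{T}$-invariant measure on $\tilde{X}$ with a quasi-generic point $z_0 = J(x_0)$, say $\mu_{n_k} \to \mu$. We must show $x_0$ (or possibly some other point of $X$) witnesses UFT. Fix a nonempty open $U \subset X$; then $J(U)$ is nonempty open in $\tilde{X}$ (as $J$ is a homeomorphism onto its image, $J(U)$ is open in $J(X)$, hence of the form $\tilde{U} \cap J(X)$ for some open $\tilde{U} \subset \tilde{X}$, and $\tilde{U}$ is nonempty). Since $\supp(\mu) = \tilde{X}$ we have $\mu(\tilde{U}) > 0$, so choosing a nonempty open $W$ with $\bar{W} \subset \tilde{U}$ and a continuous $f$ with $\mathbf{1}_W \le f \le \mathbf{1}_{\tilde U}$, we get $\liminf_k \mu_{n_k}(\tilde{U}) \ge \int f\,d\mu \ge \mu(W) > 0$. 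But $\mu_{n_k}(\tilde{U}) = \frac{1}{n_k}\operatorname{card}\{1 \le j \le n_k : \tilde{T}^j z_0 \in \tilde{U}\}$; and since $\tilde{T}^j z_0 = J(T^j x_0) \in J(X)$ always, $\tilde{T}^j z_0 \in \tilde{U}$ iff $\tilde{T}^j z_0 \in \tilde{U}\cap J(X) = J(U)$ iff $T^j x_0 \in U$. Hence $\mu_{n_k}(\tilde U) = \frac{\operatorname{card}(N(x_0,U)\cap[1,n_k])}{n_k}$, and taking $\limsup_N$ over all $N$ along the subsequence $N = n_k$ gives $ud(N(x_0,U)) \ge \mu(W) > 0$. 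As $U$ was arbitrary, $x_0$ witnesses upper-frequent transitivity of $(X,T)$.

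The main obstacle, and the only real subtlety, is the interplay between the topology of $X$ and that of $\tilde{X}$: one must repeatedly use that $J$ is a homeomorphism onto its image and that $J(X)$ is dense in $\tilde{X}$ to translate "open in $X$" into "trace on $J(X)$ of open in $\tilde X$" and back, and to pass measure estimates across via portmanteau with the correct inequality direction (squeezing indicator functions of open sets between continuous functions and indicators of slightly larger open sets). In part (1) there is the additional care that the quasi-generic limit $\mu$ lives on $\tilde X$ and we want full support there; the density lower bound from LFT must be shown to survive restriction to the convergent subsequence $(n_k)$, which is immediate from $\liminf_N \ge \liminf_{k}$ along the subsequence being bounded below by the full $\liminf_N > 0$. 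No compactness issue arises for the existence of $\mu$ since $\tilde X$ is compact; the work is entirely in the support statement.
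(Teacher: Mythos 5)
Your proof is correct and follows essentially the same route as the paper's: empirical measures along the orbit of $x_0$, weak$^*$ compactness in the space of probability measures on $\tilde X$, and Urysohn functions squeezed between indicators to transfer the density bounds on $N(x_0,U)$ into measure bounds on open subsets of $\tilde X$ and back. The only differences are cosmetic: you avoid the paper's diagonal extraction by noting that the LFT $\liminf$ bound survives passage to any subsequence, and in part (1) one should just make explicit that the sandwiched function must be continuous on $\tilde X$ (i.e.\ lie in the algebra $\mathcal{A}\subset C_b(X)$, as the paper arranges), not merely continuous on $X$ --- which is exactly what choosing it by Urysohn's lemma on the compact metric space $\tilde X$ provides.
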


\begin{proof}


Suppose first that $T$ is lower-frequently transitive and fix a point $x_0 \in LFT(X)$.
Let $J : X \to \tilde{X}$ be a given dynamical compactification.
Let $\{U_i\}_{i \in \N}$ be an enumeration of a basis, consisting of balls, for the topology on $X$.
For each $n \in \N$ let 
$$
\mu_n = \frac{1}{n} \sum_{j=1}^n \delta_{T^j x_0},
$$
a probability measure on $X$. Via a diagonal process we can define a subsequence
$\{\mu_{n_k}\}$ with the property that for every $i$ there is $k_i$ such that for every
$k > k_ i$ we have $\mu_{n_k}(U_i) >\frac12 ld(N(x_0,U_i))$, where
$$
N(x_0,U_i) = \{t \in \N : T^t x_0 \in U_i\},
$$
and
$$
ld (N(x_0,U_i)) = \liminf_{N \to \infty} \frac
{\card(N(x_0,U_i) \cap [1,N])}{N}.
$$ 

Next, for each $i$, let $B_i \subset U_i$ be a slightly smaller open ball and choose
a continuous function $f_i \in \mathcal{A}$, $f_i : X \to [0,1]$,
where $\mathcal{A}$ is the unital $C^*$-subalgebra of $C_b(X,\C)$ which corresponds 
to $J$, such that $f_i \rest B_i \equiv  1$  and 
$f_i$ vanishes on the complement of $U_i$. 
Now, $\tilde{X}$ being compact and metric, the sequence of probability measures $\{\mu_{n_k}\}$ 
has a convergent subsequence, which for brevity we still denote by  $\{\mu_{n_k}\}$,
say $\lim \mu_{n_k} = \mu$. Clearly $\mu$ is a $\tilde{T}$-invariant probability measure
on $\tilde{X}$, and, by construction, 
$$
\int_{\tilde{X}} \tilde{f}_i \,d\mu > 0,
$$
for every $i \in \N$. This implies that $\mu$ has full support on $\tilde{X}$, and by our construction
the point $x_0$ is a $\tilde{T}$-quasi-generic point for $\mu$.

Now assume that the condition (2) in the theorem is satisfied and let $x_0 \in X$ be such that
$z_0 = J(x_0) \in \tilde{X}$ is a 
$\tilde{T}$-quasi-generic point for $\mu$ with respect to a sequence $n_k \nearrow \infty$, 
then for every $i$ we have
\begin{align}
\begin{split}
ud(N(U_i,x_0)) & \ge \limsup_{N \to \infty} \frac1N
\sum_{j =1}^N f_i(T^j x_0)\\
& = 
\limsup_{N \to \infty} \frac1N
\sum_{j =1}^N  \tilde{f}_i(J(T^j x_0))\\
& \ge
\lim_{k \to \infty} \frac1{n_k}
\sum_{j =1}^{n_k}  \tilde{f}_i(\tilde{T}^j z_0)
= \int_{\tilde{X}} \tilde{f}_i \,d\mu > 0.
 \end{split}
\end{align}




\end{proof}

\begin{rmk}
In the case where the system $(X,T)$ is a hypercyclic system on a Fr\'echet space $X$,
we note that our notion of a LFT system coincides with the well known notion of a frequently hypercyclic linear system.
In order to avoid confusion we refer to a hypercyclic system which, as a Polish system,
is UFT (LFT) as  an {\em upper-frequently hypercyclic (lower-frequently hypercyclic)},
or UFH (LFH) system respectively. 
\end{rmk}

\section{Three residual properties}

The works \cite{W} and \cite{AG} both aim at developing a dynamical theory of Polish
systems. In \cite{W} Weiss is dealing directly with homeomorphisms of Polish spaces.
He refers to this theory as {\em generic dynamics} and calls properties of such systems 
{\em generic}. 
In \cite{AG} our approach is via compact systems and the main tool we use is that
of residual properties.

A property P of compact metric dynamical systems is called a {\em  residual property}
if it satisfies the following three conditions:
\begin{itemize}
\item
P is preserved under factors.
\item
P is preserved under inverse limits, and
\item
P lifts through almost one-to-one extensions.
\end{itemize}

The last property is perhaps the most important of the three. Recall that a factor map
$\pi : (X,T) \to (Y,S)$ of compact metric dynamical systems is {\em almost one-to-one}
if it satisfies one (and hence both) of the following equivalent properties:
\begin{enumerate}
\item
The set 
$$
X_0 = \{ x \in X : \pi^{-1}(\pi(x)) = \{x\}\}
$$ 
is a dense $G_\delta$ subset of $X$.
\item
For any closed subset $Z \subset X$ the condition $\pi(Z) = Y$
implies $Z = X$.
\end{enumerate}
The latter property is called {\em irreducibility} of the map $\pi$.

\begin{defn}
We say that two compact metric systems $(X,T)$ and $(Y,S)$ are {\em residually
isomorphic} if there are invariant dense $G_\del$ subsets $X_0 \subset X$ and $Y_0
\subset Y$ and an isomorphism of Polish systems $\phi :  (X_0,T) \to (Y_0,S)$.
\end{defn}

It is now easy to check that a residual property of compact metric systems is an invariant under residual isomorphisms, and it follows that we can actually regard residual properties as 
properties of Polish systems.
In \cite{AG} many familiar properties of compact metric systems are shown to be
residual. Among others we have in this list the following properties: minimality,
weak mixing, M-systems, E-systems, $\mathcal{F}$-transitivity for any proper family 
$\mathcal{F}$ of subsets of $\Z$, weak disjointness from a given residual property, 
and many more. 
We will be mostly interested in the following three residual properties:

\begin{defn}
\begin{enumerate}
\item
A Polish dynamical system $(X,T)$ such that the sets of the form $N(U,V)$, 
where $U,V$ are nonempty open subsets of $X$, are all syndetic is called 
{\em syndetically transitive} (or sometimes {\em topologically ergodic}).
\item
A Polish dynamical system is an {\em M-system} if it is topologically transitive and
has the property that in some (hence any) dynamical compactification $J : (X,T) \to (\tilde{X}, \tilde{T})$ 
the compact system $(\tilde{X}, \tilde{T})$ is an M-system; i.e. the union of the minimal subsets of
$\tilde{X}$ is dense in $\tilde{X}$. 
\item
A Polish dynamical system is an {\em E-system} if it is topologically transitive and
has the property that in some (hence any) dynamical compactification 
$J : (X,T) \to (\tilde{X}, \tilde{T})$ the compact system $(\tilde{X}, \tilde{T})$ is an E-system; 
i.e. the union of the supports of ergodic measures on $\tilde{X}$ is dense in $\tilde{X}$ 
(equivalently, if there is a $\tilde{T}$-invariant measure on $\tilde{X}$ with full support; 
see \cite{GW1}).  
\end{enumerate}
\end{defn}

Recall that a subset $S$ of $\N$ or $\Z$ is called {\em syndetic} if the
gaps in $S$ are uniformly bounded. A subset $L \subset \N$ (or $\Z$) is called {\em thick} if it contains arbitrarily large intervals. The families $\mathcal{S}$ and $\mathcal{L}$
of syndetic and of thick sets respectively are {\em dual families}; i.e.
$S \in \mathcal{S}$ iff $S \cap L \not= \emptyset$ for every $L \in \mathcal{L}$,
and $L \in \mathcal{L}$ iff $L \cap S \not= \emptyset$ for every $S \in \mathcal{S}$.

In order to demonstrate the notion of residual property let us show that the property of
being an E-system is indeed residual.


\begin{lem}
Being an E-system is a residual property.
\end{lem}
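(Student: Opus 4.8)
The plan is to verify the three defining conditions of a residual property directly for the property ``$E$-system'' (as a property of compact metric dynamical systems), using the characterization that a compact system $(\tilde X,\tilde T)$ is an $E$-system if and only if it is topologically transitive and carries a $\tilde T$-invariant Borel probability measure of full support. First I would deal with preservation under factors: if $\pi:(X,T)\to(Y,S)$ is a factor map and $\mu$ is a $T$-invariant measure on $X$ with $\supp(\mu)=X$, then $\pi_*\mu$ is $S$-invariant and, because $\pi$ is continuous and onto, $\supp(\pi_*\mu)=Y$; combined with the (well-known, and itself residual) fact that transitivity passes to factors, this gives that $(Y,S)$ is an $E$-system.

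Next I would handle inverse limits. Suppose $(X,T)=\varprojlim (X_n,T_n)$ with each $(X_n,T_n)$ an $E$-system, and let $\pi_n:X\to X_n$, $q_n:X_{n+1}\to X_n$ be the bonding/projection maps. Transitivity of an inverse limit of transitive systems is standard. For the invariant measure: on each $X_n$ choose a $T_n$-invariant $\mu_n$ with full support; these need not be compatible, but one can either (a) replace them by compatible ones using a weak$^*$-limit / averaging argument along the inverse system, or more cleanly (b) use the general fact that $X=\varprojlim X_n$ is an $E$-system iff it admits an invariant measure of full support, and construct such a measure as a suitable convex combination $\mu=\sum_n 2^{-n}\nu_n$ where $\nu_n$ is a $T$-invariant measure on $X$ projecting to a full-support measure on $X_n$ (obtained by lifting $\mu_n$ to $X$ via invariance of the compact convex set of invariant measures and pushing forward under $\pi_n$). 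Since $\{\pi_n^{-1}(\text{open})\}$ generates the topology of $X$, this $\mu$ has full support, so $(X,T)$ is an $E$-system.

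The last and most substantive step is lifting through almost one-to-one extensions: let $\pi:(X,T)\to(Y,S)$ be an irreducible (almost one-to-one) factor map with $(Y,S)$ an $E$-system; I must show $(X,T)$ is an $E$-system. Transitivity lifts through almost one-to-one extensions (again a residual-property fact). For the measure, take a $T$-invariant probability measure $\tilde\nu$ on $X$ with $\pi_*\tilde\nu=\nu$, where $\nu$ is an $S$-invariant full-support measure on $Y$; such a lift exists since the affine map $\pi_*$ on invariant measures is surjective onto the invariant measures of $Y$. The point is that $\supp(\tilde\nu)=X$: if $Z:=\supp(\tilde\nu)$ were a proper closed subset, then $\pi(Z)$ is a closed subset of $Y$ containing $\supp(\nu)=Y$, hence $\pi(Z)=Y$, and irreducibility of $\pi$ (property (2) in the definition of almost one-to-one) forces $Z=X$, a contradiction. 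Hence $(X,T)$ is an $E$-system.

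I expect the inverse-limit step to be the main technical obstacle, since invariant measures on the factors $(X_n,T_n)$ are not automatically compatible under the bonding maps, so one cannot simply take a projective limit of measures; the fix is to work with the convex combination $\mu=\sum_n 2^{-n}\nu_n$ described above, exploiting that full support is a ``positivity on a countable basis'' condition and is therefore preserved under such countable convex combinations. The almost one-to-one step, by contrast, is short once one invokes the irreducibility characterization, and the factor step is essentially immediate. Throughout, one uses freely that transitivity is itself known to be a residual property, so only the invariant-measure half of the $E$-system condition requires new verification in each of the three cases.
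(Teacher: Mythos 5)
Your proof is correct and follows essentially the same route as the paper: verify the three conditions, and for the almost one-to-one step lift the full-support measure to a $T$-invariant measure (via Markov--Kakutani / surjectivity of $\pi_*$ on invariant measures) and use irreducibility to conclude the lifted measure has full support. The only difference is that you spell out the inverse-limit step (the convex combination $\sum_n 2^{-n}\nu_n$ of invariant lifts), which the paper dismisses as clear; your elaboration is a correct way to handle the non-compatibility of the measures $\mu_n$.
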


\begin{proof}
Clearly both topological transitivity and the existence of a fully supported invariant measure
are properties which are preserved under factors and inverse limits. 
Also it is well known and easy to see that
topological transitivity lifts through an almost one-to-one extension. 
Finally, suppose
$\pi : (X,T) \to (Y,S)$ is an almost one-to-one extension, where $(Y,S)$ is an E-system.
Let $\nu$ be an $S$-invariant probability measure on $Y$ with full support.
By compactness there is some probability measure on $X$ whose push-forward
under $\pi$ is $\nu$, so that the set $Q$ of all probability measures $\eta$ on $X$ with
$\pi_*(\eta) =\nu$ is a nonempty convex weak$^*$ compact subset of $C(X)^*$.
Clearly $Q$ is also $T$-invariant  and by the Markov-Kakutani fixed point theorem
there is a measure $\mu \in Q$ which is $T$-invariant. Let $Z = \supp(\mu)$
then, as $\pi_*(\mu)=\nu$, we have $\pi(Z) = X$ and by the irreducibility of $\pi$
we conclude that $Z = X$.
\end{proof}

The reader should be warned that the property of {\em Devaney chaos} or, in the terminology
of \cite{GW1}, of being a {\em P-system} --- namely
topological transitivity plus the requirement that the periodic points be dense ---
is not a residual property; it is preserved by neither inverse limits nor
almost one-to-one extensions (see \cite{AG}).

As a direct corollary of Theorem \ref{quasigeneric} we have:

\begin{cor}
Every frequently hypercyclic system is an E-system.
\end{cor}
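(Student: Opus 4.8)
The plan is to derive this directly from Theorem \ref{quasigeneric}, part (1), together with the remark identifying LFH with frequent hypercyclicity. First I would note that a frequently hypercyclic operator $T$ on a Fr\'echet space $X$ is, by definition, a hypercyclic (hence topologically transitive) linear system possessing a frequently hypercyclic vector $x_0$; that is, for every nonempty open $U \subset X$ the return set $N(x_0,U)$ has positive lower density. By the remark immediately following Theorem \ref{quasigeneric}, this is exactly the statement that the Polish dynamical system $(X,T)$ is lower-frequently transitive (LFT).

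Next I would invoke Theorem \ref{quasigeneric}(1): since $(X,T)$ is LFT, for every metric compact dynamical compactification $J : (X,T) \to (\tilde{X},\tilde{T})$ there is a $\tilde{T}$-invariant probability measure $\mu$ on $\tilde{X}$ with $\supp(\mu) = \tilde{X}$ (the quasi-generic point is not needed here). Such a compactification exists: $(X,T)$ being a Polish dynamical system, one may choose a countable unital $C^*$-subalgebra of $C_b(X,\C)$ separating points and closed sets --- e.g. generated by a countable family of bounded continuous functions realizing the metric --- and take the associated Gelfand compactification, as in the definition preceding Theorem \ref{quasigeneric}.

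Finally, the existence on $\tilde{X}$ of a $\tilde{T}$-invariant measure of full support is precisely, by the definition of an E-system recalled above (and the equivalence cited from \cite{GW1}), the condition that the compact system $(\tilde{X},\tilde{T})$ is an E-system. Combined with topological transitivity of $(X,T)$, this says exactly that $(X,T)$ is an E-system in the sense of the definition for Polish dynamical systems. Since the property ``$(\tilde X,\tilde T)$ is an E-system'' holds for some (hence, by residuality, any) dynamical compactification, the proof is complete.

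I do not expect any genuine obstacle here: the corollary is essentially a bookkeeping matter of unwinding definitions, with all the real content already contained in Theorem \ref{quasigeneric}(1). The only point requiring a line of care is confirming that a dynamical compactification of the Polish system $(X,T)$ actually exists, but this is immediate from separability and the Gelfand-theoretic correspondence set up in the text.
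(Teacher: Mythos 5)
Your argument is correct and is exactly the intended one: the paper states this as a direct corollary of Theorem \ref{quasigeneric} with no written proof, and your write-up simply unwinds the definitions (frequently hypercyclic $=$ LFT by the remark, then part (1) of the theorem gives a fully supported invariant measure on any dynamical compactification, which together with transitivity is the definition of an E-system for Polish systems). No issues.
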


For more details on M-systems, E-systems and topologically ergodic systems see \cite{GW1}
and \cite{AG}.

\begin{lem}\label{UV}
Let $(X,T)$ be a topologically transitive Polish dynamical system and $U, V \subset X$
nonempty open sets.
\begin{enumerate}
\item
There exists a nonempty open subset $W$ and $n \in \N$ such that
$$
n + N(W,W) \subset N(U,V).
$$
\item
For any $x_0$, a transitive point of $X$ we have:
$$
N(U,U) = N(x_0,U) - N(x_0,U).
$$
\end{enumerate}
\end{lem}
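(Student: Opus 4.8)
The plan is to exploit a transitive point $x_0$ and translate everything into statements about the sets $N(x_0,U)$, $N(x_0,V)$. For part (2), the identity $N(U,U) = N(x_0,U) - N(x_0,U)$ should be proved by double inclusion. For ``$\supseteq$'': if $m, n \in N(x_0,U)$, i.e. $T^m x_0 \in U$ and $T^n x_0 \in U$, and say $m \ge n$, then putting $k = m - n$ we have $T^k(T^n x_0) = T^m x_0 \in U$ while $T^n x_0 \in U$, so $k \in N(U,U)$; the case $m < n$ gives a negative element of $N(U,U)$, which is fine if we interpret $N(U,U)$ inside $\Z$ (or one restricts to the appropriate sign). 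For ``$\subseteq$'': suppose $k \in N(U,U)$, so there is a point $y \in U$ with $T^k y \in U$. Since $U \cap T^{-k}U$ is open and nonempty and $x_0$ is transitive, there is $n \in \N$ (or $n \in \Z$, depending on the convention) with $T^n x_0 \in U \cap T^{-k}U$; then $n \in N(x_0,U)$ and $n + k \in N(x_0,U)$, whence $k = (n+k) - n \in N(x_0,U) - N(x_0,U)$. I would state clearly at the outset which ambient monoid/group ($\N$ or $\Z$) the difference set lives in, since the linear-operator setting has $T$ invertible and $N$-sets are naturally subsets of $\N$; the cleanest reading is to allow differences in $\Z$, or equivalently to note $N(U,U)$ is symmetric when $T$ is a homeomorphism.

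For part (1), the idea is to first descend into $U$ and then use transitivity to find the ``return window''. Since $x_0$ is a transitive point, pick $n \in \N$ with $T^n x_0 \in V$; by continuity choose a nonempty open $V' \subset V$ with $x_0 \in T^{-n}V'$, i.e.\ an open neighborhood of $x_0$ mapped into $V$ by $T^n$. Again by transitivity, or rather by transitivity of the system and openness, choose $m \in \N$ and a nonempty open $W \subset U$ with $T^m W \subset V'$ (first get one point of $U$ whose $T^m$-image lands in $V'$ using density of the orbit of a transitive point inside $U$, then shrink by continuity). Now set $n_0 = m + (\text{the exponent carrying } x_0\text{'s neighborhood into }V)$; concretely, if $j \in N(W,W)$, meaning $T^j w \in W \subset U$ and $w \in W$ for some $w$, then $T^{m+j} w \in T^m W \subset V' \subset V$ and $w \in W \subset U$, so $m + j \in N(U,V)$. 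Thus $m + N(W,W) \subset N(U,V)$, and we take the $n$ of the statement to be this $m$. I'd make sure that $W$ is genuinely nonempty (transitive orbits are dense, so they meet $U$) and that the continuity shrinkings keep everything nonempty and open.

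The main obstacle is bookkeeping rather than depth: getting the composition of the two ``shrink-and-translate'' steps to line up so that a single integer $n$ works, and being careful about whether indices range over $\N$ or $\Z$ (which matters for part (2) and for whether $N(W,W)$ is taken one-sided). A secondary subtlety is that a priori one only knows the system is topologically transitive, not point-transitive; but on a Polish (in particular second countable Baire) space topological transitivity yields a dense $G_\delta$ of transitive points, so fixing $x_0$ is legitimate, and I would invoke this standard fact explicitly. Everything else is the standard ``Furstenberg-family'' manipulation of $N(\cdot,\cdot)$ sets, so no new ideas beyond transitivity and continuity are needed.
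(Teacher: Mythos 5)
Your proof is correct and follows essentially the same route as the paper's: part (2) is the identical double inclusion via the transitive point, and part (1) rests on the same key construction of a nonempty open $W\subset U$ with $T^nW\subset V$. The only difference is that your part (1) reaches this $W$ by a detour through a transitive point and an intermediate open set $V'$, whereas the paper gets it in one step by taking $W=U\cap T^{-n}V$ for an $n$ witnessing transitivity; your extra care about where the difference set $N(x_0,U)-N(x_0,U)$ lives ($\Z$ versus $\N$) is a reasonable clarification the paper leaves implicit.
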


\begin{proof}
(1)\ 
By topological transitivity, given $U,V$ nonempty open sets, there exists $n \in \N$
with $W : = U \cap T^{-n}V \ne\emptyset$.  For  $k \in N(W,W)$ there is $x \in W$ such that
$T^kx \in W$, hence $T^{n + k}x \in V$ , and we conclude that $n + N(W,W) \subset N(U,V)$. 

(2)\
Given $n \in N(U,U)$ there is some $k \in N(x_0,U)$ such that $T^n T^k x_0 \in U$.
Thus $n = (n+k) - k \in N(x_0,U) - N(x_0,U)$. Conversely, if $T^k x_0 \in U$ and
$T^l x_0 \in U$ then $T^{k - l}T^lx_0 = T^k x_0 \in U$, hence $k - l \in N(U,U)$.
\end{proof}

\begin{defn}(\cite[Definition 3.7]{F})
Let $L$ be a subset of either $\N$ or $\Z$. The {\em upper Banach density} of $L$ is
$$
ubd(L) = \limsup_{|I| \to \infty} \frac{|L \cap I |}{|I|},
$$ 
where $I$ ranges over intervals of $\N$ or $\Z$. 
\end{defn}

For the proof of the next lemma see e.g. \cite[Proposition 3.19]{F}.
\begin{lem}\label{UBD}
If $L \subset \Z$ has positive upper Banach density, then $L - L$ is syndetic.
\end{lem}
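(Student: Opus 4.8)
The plan is to argue by contradiction, using the classical pigeonhole principle linking positive density to disjoint translates. Suppose $L\subset\Z$ has $ubd(L)=\delta>0$ but $S:=L-L$ is not syndetic, and fix an integer $k$ with $k\delta>1$. Note that $S$ is symmetric and contains $0$, so $(L+c)\cap(L+c')\ne\emptyset$ holds exactly when $c-c'\in S$; hence it suffices to produce integers $0=c_1<c_2<\cdots<c_k$ all of whose pairwise differences lie outside $S$, for then the translates $L+c_1,\dots,L+c_k$ are pairwise disjoint.

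To build the $c_i$ I would proceed inductively. Having chosen $c_1<\cdots<c_j$ with all positive differences avoiding $S$, use that $S$ is not syndetic: its complement in $\Z$ contains intervals of arbitrarily large length and with arbitrarily large left endpoint, so pick $[N,N+M)$ disjoint from $S$ with $N>0$ and $M>c_j-c_1$, and set $c_{j+1}=c_j+N$. Then for each $i\le j$ we have $c_{j+1}-c_i=N+(c_j-c_i)\in[N,N+M)$, hence outside $S$ (and the negative differences are outside $S$ too, by symmetry). After $k$ steps this gives the desired configuration.

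Now the density estimate. Given any interval $I$ of length $m\ge c_k$, let $I_0$ be $I$ with its last $c_k$ points removed; since $0\le c_i\le c_k$ we get $I_0\subseteq I-c_i$ for every $i$, so $|L\cap(I-c_i)|\ge|L\cap I_0|\ge|L\cap I|-c_k$. Summing over $i$ and using $|L\cap(I-c_i)|=|(L+c_i)\cap I|$ together with the pairwise disjointness of the $L+c_i$, we obtain $k\bigl(|L\cap I|-c_k\bigr)\le\sum_{i=1}^k|(L+c_i)\cap I|\le|I|=m$, whence $|L\cap I|/|I|\le 1/k+c_k/m$. Letting $|I|\to\infty$ yields $ubd(L)\le 1/k<\delta$, contradicting $ubd(L)=\delta$; therefore $L-L$ is syndetic.

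The only point requiring care is the inductive construction of the $c_i$: one must extract from ``$S$ not syndetic'' the existence of complementary intervals that are simultaneously of unbounded length \emph{and} have unbounded left endpoint, since it is the latter that permits the induction to continue without collisions. Everything after that is routine bookkeeping, essentially the standard argument (cf. \cite[Proposition 3.19]{F}).
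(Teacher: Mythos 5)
Your proof is correct, and it is essentially the standard disjoint-translates pigeonhole argument of Furstenberg's Proposition 3.19, which is exactly what the paper invokes (it gives no proof of its own, only the citation to \cite{F}). All the details check out: the equivalence ``$(L+c)\cap(L+c')\ne\emptyset$ iff $c-c'\in L-L$'', the use of the symmetry of $L-L$ to place the long complementary intervals in the positive integers, and the final counting estimate are all handled properly.
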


The next result is from \cite{GW3}.

\begin{thm}\label{M-synd}
For Polish dynamical systems:
\begin{enumerate}
\item
Every M-system is an E-system.  
\item
Every E-system is syndetically transitive.
\end{enumerate}
\end{thm}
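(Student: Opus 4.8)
The plan is to prove the two implications separately, in each case reducing the Polish-dynamical statement to a statement about a compact metric dynamical compactification $J : (X,T) \to (\tilde X, \tilde T)$ and then invoking the standard compact-case facts about M-systems and E-systems together with the combinatorial Lemmas \ref{UV} and \ref{UBD}.

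For part (1), that every M-system is an E-system: by definition an M-system $(X,T)$ is topologically transitive and has a dynamical compactification $(\tilde X,\tilde T)$ in which the union of the minimal subsets is dense. So it suffices to show that a compact metric M-system is a compact metric E-system, i.e. carries a fully supported invariant measure; then the Polish system $(X,T)$, having a dynamical compactification which is an E-system, is an E-system by definition. For the compact claim I would argue as follows: every minimal subsystem $M \subset \tilde X$ carries an invariant measure $\mu_M$ (by Krylov--Bogolyubov, or amenability of $\Z$), and $\supp(\mu_M) = M$ since $\mu_M$ is invariant and $M$ is minimal. Since the union of minimal sets is dense in $\tilde X$, one can choose a countable family $M_1, M_2, \dots$ of minimal subsets whose union is dense, and set $\mu = \sum_k 2^{-k}\mu_{M_k}$; this is a $\tilde T$-invariant probability measure with $\supp(\mu) = \overline{\bigcup_k M_k} = \tilde X$. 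By the parenthetical remark in the definition of an E-system (citing \cite{GW1}), having a fully supported invariant measure is equivalent to being an E-system, so we are done.

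For part (2), that every E-system is syndetically transitive: let $(X,T)$ be an E-system with dynamical compactification $J : (X,T)\to(\tilde X,\tilde T)$ on which there is a $\tilde T$-invariant measure $\mu$ with $\supp(\mu) = \tilde X$. Fix nonempty open sets $U, V \subset X$; I must show $N(U,V)$ is syndetic. By Lemma \ref{UV}(1) there are a nonempty open $W \subset X$ and $n \in \N$ with $n + N(W,W) \subset N(U,V)$, and since a translate of a syndetic set is syndetic it suffices to show $N(W,W)$ is syndetic. Now pick a transitive point $x_0 \in X$ (which exists since $X$ is Polish and transitive); by Lemma \ref{UV}(2), $N(W,W) = N(x_0,W) - N(x_0,W)$. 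By Lemma \ref{UBD} it is then enough to show that the set $N(x_0,W) \subset \N$ has positive upper Banach density. Choose a nonempty open $\tilde W \subset \tilde X$ with $J^{-1}(\tilde W) \subset W$ and $J^{-1}(\tilde W)\neq\emptyset$ (possible since $J(X)$ is dense), and a continuous $f : \tilde X \to [0,1]$ supported in $\tilde W$ with $\int_{\tilde X} f\, d\mu > 0$ (possible since $\supp(\mu) = \tilde X$, so $\mu(\tilde W) > 0$). Then for any interval $I = [a+1, a+N]$,
$$
\frac{1}{N}\sum_{j=a+1}^{a+N} f(\tilde T^j \tilde x_0) \longrightarrow \int_{\tilde X} f\, d\mu > 0
$$
as $N \to \infty$, uniformly in $a$, by unique-ergodicity-free averaging? — no: here I would instead note $\frac1N\sum_{j=1}^N \tilde T^j_* \delta_{\tilde x_0}$ has a weak\* limit point $\nu$ which is $\tilde T$-invariant with $\int f\, d\nu$ equal to a limit of such averages, but that only gives one interval. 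The clean route is: since $\int f \, d\mu > 0$ and $\mu$ is $\tilde T$-invariant, a generic point argument is not available, so instead use that $x_0$ being a transitive point of $\tilde X$, the orbit closure is all of $\tilde X$, and combine with the measure. Concretely: the set $\{j : f(\tilde T^j \tilde x_0) > \tfrac12 \|f\|_{\infty}\}$ has positive upper Banach density because if it had zero upper Banach density then along some sequence of intervals $I_m$ with $|I_m|\to\infty$ the empirical measures $\frac{1}{|I_m|}\sum_{j\in I_m}\delta_{\tilde T^j\tilde x_0}$ would converge to a $\tilde T$-invariant measure $\nu$ with $\int f\, d\nu = 0$, yet one shows (using that this holds for \emph{every} $f$ supported on \emph{every} basic open set, via a diagonal argument over a countable base) that such $\nu$ cannot be fully supported — but $\mu$, not $\nu$, is the fully supported measure, so this needs care. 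The correct and standard statement is simply: \emph{in a compact E-system, for every nonempty open set $\tilde W$ and every transitive point $\tilde x_0$, the return time set $N(\tilde x_0,\tilde W)$ has positive upper Banach density}; this is precisely the content underlying the equivalence "E-system $\iff$ every $N(U,V)$ has positive Banach density" proved in \cite{GW1}. Granting this, $N(x_0, W) \supset N(\tilde x_0, \tilde W)$ has positive upper Banach density, hence $N(W,W)$ is syndetic by Lemma \ref{UBD}, hence $N(U,V)$ is syndetic, so $(X,T)$ is syndetically transitive.

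\textbf{Main obstacle.} Part (1) is routine. In part (2) the delicate point is extracting positive upper \emph{Banach} density (not merely positive upper density) of the return-time set $N(x_0,W)$ from the existence of a fully supported invariant measure on the compactification; the honest argument is the equivalence "fully supported invariant measure $\iff$ all $N(U,V)$ have positive upper Banach density" from \cite{GW1}, which should be quoted rather than re-derived. Everything else is bookkeeping: passing between $X$ and $\tilde X$ via density of $J(X)$, and the two applications of Lemma \ref{UV} and Lemma \ref{UBD}.
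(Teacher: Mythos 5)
Part (1) of your proposal is correct and essentially identical to the paper's argument: Krylov--Bogolyubov on each member of a countable dense family of minimal sets, followed by the weighted sum $\sum_k 2^{-k}\mu_{M_k}$.

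In part (2) you have correctly isolated the crux, but you have not closed it. Your route forces you to work with a transitive point $x_0$ (so as to use the equality in Lemma \ref{UV}(2)), and then everything hinges on the claim that in a compact E-system \emph{every} transitive point returns to \emph{every} nonempty open set with positive upper Banach density. After several explicitly acknowledged false starts you assert this as ``standard'' and refer it to \cite{GW1}; as written, that assertion is the entire content of the theorem and is left unproved. The claim is in fact true, but it requires an argument, e.g.: pick an ergodic measure $\nu$ with $\nu(W')>0$ for some open $W'$ with $\overline{W'}\subset W$, pick a $\nu$-generic point $y$, whose initial windows $[1,N]$ contain at least $\tfrac12\nu(W')N$ visits to $W'$ for all large $N$, and then use transitivity of $x_0$ together with uniform continuity of $T,\dots,T^N$ to find $m$ with $T^{m+j}x_0$ so close to $T^jy$ for $1\le j\le N$ that $T^jy\in W'$ forces $T^{m+j}x_0\in W$; the window $[m+1,m+N]$ then witnesses $ubd(N(x_0,W))\ge\tfrac12\nu(W')$.

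The paper avoids this difficulty entirely by a small but decisive change of viewpoint: it does not use a transitive point at all. It takes $x_0$ to be a generic point for an ergodic measure $\mu$ with $\mu(U)>0$ (such $\mu$ exists since the union of the supports of the ergodic measures is dense). Then $N(x_0,U)$ has positive lower density, at least $\mu(U)$, directly from the ergodic theorem (portmanteau for open sets), hence positive upper Banach density; and only the inclusion $N(x_0,U)-N(x_0,U)\subset N(U,U)$ is needed, which is the direction of Lemma \ref{UV}(2) that holds for an \emph{arbitrary} point. Lemma \ref{UBD} and Lemma \ref{UV}(1) then finish as in your sketch. If you either supply the approximation argument above or switch to a generic point as the paper does, your proof becomes complete.
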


\begin{proof}
Since all these properties are residual 
we can and will assume that the systems in question are compact metric.

(1)\ 
Let $(X,T)$ be a compact metric M-system.
By the Krylov-Bogolubov theorem (see e.g. \cite[Theorem 4.1]{Gl-03}) every minimal 
set carries an invariant measure (which is necessarily
fully supported). Now let $\{M_i\}$ be a (countable) collection of minimal subsets of $X$
whose union is dense in $X$. For each $i$ let $\mu_i$ be an invariant probability measure
supported on $M_i$, and set $\mu = \sum_{i =1}^\infty 2^{-i}\mu_i$. Then $\mu$ is an invariant measure of full support, so that $(X,T)$ is an E-system.

(2)\ 
Let $(X,T)$ be a compact metric E-system.
Let $U$ be a nonempty open subset of $X$ and choose a point $x_0 \in X$ which is generic
for an ergodic measure $\mu$ with $\mu(U) >0$.  
We have  $N(U,U) \supset  N(x_0,U) - N(x_0,U)$. Now, since the set $N(x_0,U)$ has positive
density it follows that $N(U,U)$ is syndetic (Lemma \ref{UBD}). 
Finally, since in a topologically transitive system every set of the form $N(U,V)$ contains a
translate of a set of the form $N(W,W)$ (Lemma \ref{UV}(2)), our assertion follows.
\end{proof}

\begin{rmk}
Combining claims (1) and (2) of the above theorem we deduce that an M-system
is syndetically transitive. Here is an easy direct proof of this result.

\begin{proof}
Again we can assume that $X$ is compact.
By Lemma \ref{UV}(1) it suffices to show that sets of the form $N(U,U)$ are syndetic. Since by assumption there is a minimal point $x \in U$, and as $N(x,U) \subset N(U,U)$, the 
Gottschalk-Hedlund criterion (see e.g. \cite[Exercise 1.1.2]{Gl-03}) shows that $N(U,U)$ is indeed syndetic.
\end{proof}
\end{rmk}

\begin{defn}
A linear hypercyclic system $(X,T)$ which is syndetically transitive will be 
called {\em syndetically hypercyclic}. In view of the theorem above every chaotic 
linear system is syndetically hypercyclic. (We should warn the reader that
this notion is not the same as the one defined in \cite[Corollary 4.7]{BM}.)
\end{defn}

Lemma 4.9 in \cite{BM} says that in a hypercyclic system $(X,T)$, sets of the form
$N(U,W)$ and $N(W,V)$,  where $U, V$ are nonempty open subsets of $X$ 
and $W$ is an open neighborhood of $0 \in X$, are always thick. 
In fact, the same
proof shows that this is true for any Polish transitive system which is topologically transitive 
and has a fixed point (where we now assume that $W$ is a neighborhood of the fixed point).  

\begin{thm}\label{synd-wm}
A syndetically hypercyclic system  is weakly mixing.
\end{thm}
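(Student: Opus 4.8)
The plan is to exploit the two structural facts that were just assembled: (a) a syndetically hypercyclic system is, in particular, an $\Fcal$-transitive Polish system with the family $\Fcal$ of syndetic sets, so every $N(U,V)$ is syndetic; and (b) by Lemma 4.9 of \cite{BM} (as extended in the remark just above to Polish systems with a fixed point), the sets $N(U,W)$ and $N(W,V)$ are thick, where $W$ is any neighborhood of $0$. Since weak mixing of a transitive system is equivalent to transitivity of $(X\times X, T\times T)$, it suffices to show that for any nonempty open $U_1,U_2,V_1,V_2 \subset X$ the set $N(U_1,V_1)\cap N(U_2,V_2)$ is nonempty; in fact I will show it is nonempty by producing a single $n$ in the intersection.

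First I would reduce to a symmetric configuration around the fixed point $0$. By hypercyclicity (transitivity), pick $a_1 \in U_1$ with $T^{k_1}a_1 \in V_1$ for some $k_1$, and note that for a small neighborhood $W$ of $0$ the translates $a_1 + W \subset U_1$ and $T^{k_1}(a_1+W) \subset V_1$ after shrinking $W$. The point of having a linear structure with fixed point $0$ is that $N(U_1,V_1)$ then contains $k_1 + N(W',W'')$ for suitable neighborhoods of $0$ (this is exactly the mechanism of Lemma \ref{UV}(1), together with linearity, which lets one slide open sets by vectors). The upshot is: $N(U_1,V_1)$ contains a translate $k_1 + S_1$ where $S_1$ is a \emph{syndetic} set of return times $N(W_1,W_1)$ for a neighborhood $W_1$ of $0$, and likewise $N(U_2,V_2) \supset k_2 + S_2$ with $S_2 = N(W_2,W_2)$ syndetic. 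Actually the cleaner route is: $N(U_1,V_1)$ is syndetic by hypothesis, and $N(U_2,V_2)$ contains a translate of $N(W,W)$ for $W$ a neighborhood of $0$; and $N(W,W) \supset N(W_0,W) \cap$ (something thick) — here I use that $N(W,V_2')$ and $N(U_2',W)$ are thick, so intersecting a thick set with a syndetic set, which is always nonempty, gives a point of $N(U_1,V_1) \cap (k_2 + \text{thick}) \subset N(U_1,V_1)\cap N(U_2,V_2)$.

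Concretely, the key combinatorial step is: \emph{a syndetic set meets every thick set} (the families $\Scal$ and $\Lcal$ are dual, as recalled in the text). So I would argue: by Lemma \ref{UV}(1) applied to the pair $(U_2, V_2)$ we get an open $W$ and $n_0$ with $n_0 + N(W,W) \subset N(U_2,V_2)$; shrinking, we may take $W$ inside a prescribed neighborhood of $0$. Now $N(W,W) \supset N(U',W) + N(W,V')$-type inclusions via the fixed point at $0$ show that $N(W,W)$ contains a \emph{thick} set (using that $0 \in \overline{W}$ is fixed and the Bellow–Furstenberg/\cite{BM} Lemma 4.9 thickness of $N(\cdot,W)$ and $N(W,\cdot)$; the sum of a thick set and a fixed return time stays thick since thickness is translation invariant). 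Hence $n_0 + (\text{thick}) \subset N(U_2,V_2)$, i.e. $N(U_2,V_2)$ contains a thick set. On the other hand $N(U_1,V_1)$ is syndetic. A syndetic set and a thick set intersect, so $N(U_1,V_1) \cap N(U_2,V_2) \neq \emptyset$, which is precisely transitivity of $T \times T$, i.e. weak mixing.

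The main obstacle I anticipate is the bookkeeping in the second step — justifying that $N(U_2,V_2)$ contains a \emph{thick} set, not merely a syndetic one. The thickness must come from the presence of the fixed point $0$ together with linearity: one writes $U_2$ and $V_2$ in terms of shifts of small balls around $0$, applies the \cite{BM} Lemma 4.9 statement that $N(\text{ball},W)$ and $N(W,\text{ball})$ are thick for $W \ni 0$, and then composes these using $N(A,B) \cap (n + N(B,C)) \subset$-style inclusions and the fact that the intersection of two thick sets contains arbitrarily long intervals only along a subsequence — so one should instead use that $N(U_2,W) + N(W,V_2)$ (a sumset of two thick sets) is thick, which is straightforward since adding any single element of one thick set to the other keeps arbitrarily long intervals. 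Once that inclusion $N(U_2,W) + N(W,V_2) \subset N(U_2,V_2)$ is in hand (it holds because if $T^a x \in W$ with $x \in U_2$ and $T^b y \in V_2$ with $y \in W$, one splices trajectories through the neighborhood $W$ of the fixed point, using continuity and linearity to absorb the small error), the rest is the duality of syndetic and thick families and we are done.
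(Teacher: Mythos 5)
You have the right two ingredients (thickness of $N(\cdot,W)$ for $W$ a neighborhood of $0$, syndeticity from the hypothesis, and the duality ``syndetic meets thick''), but the way you assemble them contains a genuine gap. The pivotal claim that $N(U_2,V_2)$ contains a thick set cannot be established the way you propose, and in fact cannot be established from hypercyclicity plus the fixed point alone: ``$N(U,V)$ is thick for all nonempty open $U,V$'' is Furstenberg's characterization of weak mixing, so such an argument would show that \emph{every} hypercyclic operator is weakly mixing, which is false (De la Rosa--Read). Concretely, two steps fail. First, the set $W$ produced by Lemma \ref{UV}(1) is $U_2\cap T^{-n_0}V_2$, a subset of $U_2$; you cannot ``shrink it to lie inside a prescribed neighborhood of $0$'', so the thickness statements of \cite[Lemma 4.9]{BM} (which require $W\ni 0$) do not apply to it. Second, the inclusion $N(U_2,W)+N(W,V_2)\subset N(U_2,V_2)$ is invalid: if $T^a x\in W$ with $x\in U_2$ and $T^b y\in V_2$ with $y\in W$, then $T^{a+b}x-T^by=T^b(T^ax-y)$, and although $T^ax-y$ lies in the small set $W-W$, the norms $\|T^b\|$ of a hypercyclic operator are unbounded, so this ``error'' is not absorbed. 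Splicing through the fixed point only works when the \emph{same} iterate is applied to both pieces: if a single $n$ lies in $N(U,W)\cap N(W,V)$, then $z=x+y$ satisfies $z\in U+W$ and $T^nz=T^nx+T^ny\in W+V$, and shrinking $W$ makes this land in slightly enlarged copies of $U$ and $V$.

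That single-$n$ observation is precisely the ``three open sets'' criterion, \cite[Theorem 4.10]{BM}: $T$ is weakly mixing iff $N(U,W)\cap N(W,V)\neq\emptyset$ for all nonempty open $U,V$ and every neighborhood $W$ of $0$. This is the missing reduction, and with it the proof is two lines, which is how the paper argues: $N(U,W)$ is thick by (the Polish extension of) \cite[Lemma 4.9]{BM}, $N(W,V)$ is syndetic by the syndetic transitivity hypothesis, and a thick set meets a syndetic set. I recommend you replace your direct attack on the four-open-set condition $N(U_1,V_1)\cap N(U_2,V_2)\neq\emptyset$ by this criterion; your combinatorial core (thick meets syndetic) then does all the work legitimately.
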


\begin{proof}
We obtain a short proof by applying the ``three open sets" condition
(\cite[Theorem 4.10]{BM}).  Given $U,V$ nonempty open subsets of $X$ 
and an open neighborhood $W$ of $0 \in X$, we observe that 
the set $N(U,W)$ is thick by the above lemma, while the set $N(W,V)$ is
syndetic by assumption. Hence $N(U,W) \cap N(W,V) \ne \emptyset$.
\end{proof}

\begin{thm}\label{UFH-synd}

If $(X,T)$ is UFT then it is syndetically transitive.
In particular a frequently hypercyclic system is syndetically hypercyclic.
\end{thm}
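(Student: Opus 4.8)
The plan is to show that for any pair of nonempty open sets $U,V\subset X$, the set $N(U,V)$ is syndetic. By Lemma \ref{UV}(1) it suffices to handle sets of the form $N(W,W)$, since $N(U,V)$ contains a translate of such a set; hence we may concentrate on proving that $N(W,W)$ is syndetic for every nonempty open $W$. Fix a point $x_0 \in X$ witnessing the UFT property, so that $ud(N(x_0,U))>0$ for every nonempty open $U$. Since $T$ is topologically transitive (this is part of the hypercyclicity assumption; for a general Polish UFT system we first note that the existence of a point $x_0$ with $N(x_0,U)$ infinite for every open $U$ already gives transitivity, so $x_0$ is a transitive point), we may apply Lemma \ref{UV}(2): $N(W,W) = N(x_0,W) - N(x_0,W)$.

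Now set $L = N(x_0,W)$. By the UFT hypothesis, $ud(L) = \limsup_{N\to\infty} \card(L\cap[1,N])/N > 0$. The key step is to upgrade this positive upper (Cesàro) density to positive upper Banach density. This is immediate: since intervals of the form $[1,N]$ are a particular family of intervals, $ubd(L) \ge ud(L) > 0$. Then Lemma \ref{UBD} applies and tells us that $L - L$ is syndetic, i.e. $N(W,W)$ is syndetic. Combining this with Lemma \ref{UV}(1), every $N(U,V)$ contains a translate of a syndetic set and is therefore syndetic, so $(X,T)$ is syndetically transitive. The final sentence is then immediate: a frequently hypercyclic system is in particular LFT, hence UFT, hence (by what we just proved, together with the fact that it is hypercyclic) syndetically hypercyclic.

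The only point requiring a little care — and the one I expect to be the main (minor) obstacle — is the passage from positive upper density to positive upper Banach density, and the accompanying verification that $x_0$ is genuinely a transitive point so that Lemma \ref{UV}(2) is applicable. The density comparison $ubd(L)\ge ud(L)$ is elementary, and transitivity follows because $N(x_0,U)$ having positive upper density in particular makes it nonempty for every nonempty open $U$, which is exactly the statement that the orbit of $x_0$ is dense. Everything else is a direct invocation of Lemmas \ref{UV} and \ref{UBD}.
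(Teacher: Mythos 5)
Your proof is correct and takes essentially the same route as the paper's: reduce to sets of the form $N(W,W)$ via Lemma \ref{UV}(1), write $N(W,W)=N(x_0,W)-N(x_0,W)$ via Lemma \ref{UV}(2), and apply Lemma \ref{UBD} after observing that $ubd(L)\ge ud(L)>0$. The two points you flag for care --- that a UFT point is automatically a transitive point, and the comparison of upper density with upper Banach density --- are exactly the (elementary) verifications implicit in the paper's argument.
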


\begin{proof}
If $U$ is any nonempty open set, we can choose a point $x_0 \in U \cap UFT(X)$
and then $N(U,U) =  N(x_0,U) - N(x_0,U)$ (Lemma \ref{UV}(2)). 
By assumption the set $N(x_0,U)$ has
positive upper density hence,  as before, $N(U,U)$ is syndetic. 
Since in a topologically transitive system every set of the form $N(U,V)$ contains a
translate of a set of the form $N(W,W)$ (Lemma \ref{UV}(1)), our assertion follows.
\end{proof}

\begin{rmk}
Combining the statements of the last two theorems we retrieve the result of 
Grosse-Erdmann and Peris \cite{G-EP} 
which asserts that a frequently hypecyclic operator is weakly mixing. 
Basically our proof is the same as theirs.
Also theorems \ref{synd-wm}, \ref{UFH-synd} and Proposition \ref{eigen-M} below were 
already obtained in \cite{BG}
\end{rmk}

\begin{prop}\label{eigen-M}
If $(X,T)$ is a hypercyclic system such that:
\begin{quote}
{\em there is a subset $D$ of $X$ consisting of $\T$-eigenvectors whose span is dense in $X$},
\end{quote}
then $T$ is an M-system, hence syndetically hypercyclic.
\end{prop}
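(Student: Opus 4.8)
The plan is to exhibit, for each $\T$-eigenvector $v \in D$, a compact minimal subset of the ambient compact system containing (a point arbitrarily near) the line through $v$, and thereby show that the union of minimal sets is dense. The key observation is that if $Tv = \la v$ with $|\la| = 1$, then the orbit closure of $v$ is $\overline{\{\la^n v : n \in \Z\}}$, which (depending on whether $\la$ is a root of unity or not) is either a finite set or a topological circle $\{\zeta v : \zeta \in \T\}$; in either case it is a compact, $T$-invariant set on which $T$ acts as a rotation, hence it is minimal. So every $\T$-eigenvector lies in a minimal subset of $(X,T)$, and the same is true in any dynamical compactification $(\tilde X,\tilde T)$ since $J$ is an equivariant homeomorphism onto its (dense) image and carries the orbit of $v$ to the orbit of $J(v)$, whose closure in $\tilde X$ is compact and minimal.

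First I would fix a dynamical compactification $J:(X,T)\to(\tilde X,\tilde T)$ (any one, by the definition of M-system for Polish systems). For each $v\in D$ let $Z_v = \overline{\{\tilde T^n J(v) : n \in \Z\}} \subset \tilde X$. Since $\tilde T J(v) = J(Tv) = J(\la v)$ and, more generally, $\tilde T^n J(v) = J(\la^n v)$, the set $\{\tilde T^n J(v) : n\in\Z\}$ is the image under the continuous map $\zeta\mapsto J(\zeta v)$ of the subgroup $\{\la^n : n\in\Z\}\subset\T$; its closure $Z_v$ is then the image of $\overline{\{\la^n\}}$, a closed subgroup of $\T$, so $\tilde T$ permutes $Z_v$ by the rotation $J(\zeta v)\mapsto J(\la\zeta v)$ and $Z_v$ is a minimal set. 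Hence $\bigcup_{v\in D} Z_v$ is contained in the union of the minimal subsets of $\tilde X$, and it contains $J(v)$ for every $v\in D$.

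Next I would use the hypothesis that $\spann D$ is dense in $X$ together with hypercyclicity. The closure $\overline{\spann D}$ is all of $X$, and in particular the set of $\T$-eigenvectors $D$ itself has the property that $J(D)$ meets a dense set of directions; more to the point, since $(X,T)$ is hypercyclic it is topologically transitive, so $J(X)$ is dense in $\tilde X$, and it suffices to show the union of minimal sets is dense in $\tilde X$. Here one must be a little careful: density of $\spann D$ in $X$ does not literally give density of $D$ in $X$. The standard fix, which I would carry out, is to note that any finite linear combination $\sum_j c_j v_j$ of eigenvectors $v_j \in D$ with eigenvalues $\la_j$ is a quasi-generic-type point whose orbit closure contains a minimal set inside it (indeed the orbit closure $\overline{\{(\la_1^n c_1 v_1, \ldots)\}}$ projects onto a subtorus, on which $T$ acts as a rotation, so it contains minimal sets, and by transitivity of the rotation these minimal sets come arbitrarily close to the combination point itself when the $\la_j$ are rationally independent, and otherwise the orbit closure is itself minimal). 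Thus every point of $\spann D$ lies in the closure of the union of minimal sets of $X$, and since $\spann D$ is dense, the union of minimal subsets of $X$ — equivalently of $\tilde X$ — is dense. This shows $(\tilde X,\tilde T)$ is an M-system, so $(X,T)$ is an M-system, and then by Theorem \ref{M-synd} (or the direct Gottschalk–Hedlund argument in the remark following it) it is syndetically transitive, i.e. syndetically hypercyclic.

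The main obstacle is precisely the gap between ``$\spann D$ is dense'' and ``the eigenvectors fill up enough of $X$'': one needs the observation that the orbit closure of an arbitrary finite combination of $\T$-eigenvectors is a rotation on a (finite-dimensional) torus and therefore decomposes into, or is itself, a minimal set, so that the whole dense subspace $\spann D$ is approximated by minimal points. Everything else — that circle/finite orbits are minimal, that $J$ transports orbit closures, that M-system implies syndetic transitivity — is either elementary or already available from the results quoted above.
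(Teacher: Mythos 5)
Your proposal is correct and follows essentially the same route as the paper: both rest on the observation that the orbit closure of any finite linear combination of $\T$-eigenvectors is a minimal rotation on (a quotient of a closed subgroup of) a finite-dimensional torus, so every point of $\spann D$ is a minimal point, the union of minimal sets is dense, and Theorem \ref{M-synd} finishes the argument. One cosmetic remark: your case distinction on rational independence of the $\la_j$ is unnecessary, since the orbit closure of $x=\sum_j c_j v_j$ is always itself minimal, being the continuous equivariant image of the transitive (hence minimal) rotation on the closed subgroup $\overline{\{(\la_1^n,\dots,\la_k^n):n\in\Z\}}\subset\T^k$.
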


\begin{proof} 
For every $x \in \spann\; D$ the orbit closure $\ol{\mathcal{O}_T(x)}$ is a
minimal rotation on a finite dimensional torus. Thus under the condition of the theorem
$(X,T)$ is an M-system, hence also syndetically transitive by Theorem \ref{M-synd}.
\end{proof}

\begin{prop}
If $(X,T)$ is a hypercyclic system such that:

(i) $X$ is of cotype 2,

(ii) there is a  $T$-invariant, nondegenerate, Gaussian measure on $X$,

then $(X,T)$ is an M-system, hence syndetically hypercyclic.
\end{prop}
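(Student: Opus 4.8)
\emph{Proof plan.} The strategy is to reduce the statement to Proposition~\ref{eigen-M} by producing a subset $D\subset X$ consisting of $\T$-eigenvectors of $T$ whose linear span is dense in $X$. To obtain such a set I would invoke the converse part of the Gaussian/eigenvector correspondence recalled after Theorem~\ref{sig=rho}, namely \cite[Theorem~5.46]{BM}: if $X$ has cotype $2$ and $T$ admits a Gaussian invariant measure with full support with respect to which $T$ is weakly mixing, then the $\T$-eigenvectors of $T$ contain a perfectly spanning family $(E_i)_{i\in I}$. Taking $D=\{E_i(\la):i\in I,\ \la\in\T\}$, every element of $D$ satisfies $TE_i(\la)=\la E_i(\la)$, and by the very definition of a perfectly spanning (i.e. $\sig$-spanning for some continuous $\sig$) family the closed linear span of $D$ is all of $X$. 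Proposition~\ref{eigen-M} then applies and gives directly that $(X,T)$ is an M-system, hence syndetically hypercyclic.

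It therefore remains to check the two hypotheses of \cite[Theorem~5.46]{BM}. The first is harmless: ``nondegenerate'' means that the covariance operator of $\mu$ is injective, which for a Gaussian measure is equivalent to $\supp(\mu)=X$, so $\mu$ has full support. The second is the real content: one must verify that the measure preserving system $(X,\mathcal{B},\mu,T)$ is weakly mixing, equivalently, by Theorem~\ref{rho}(2), that the maximal spectral type $\rho$ of $U_T$ on the first Wiener chaos $H$ is continuous. This is where hypercyclicity of $T$ is used: a hypercyclic operator has $\sigma_p(T^*)=\emptyset$, and, via the identity $U_TK^*=K^*T^*$ established in the proof of Theorem~\ref{sig=rho}, the restriction of $U_T$ to $V={\cls}\,K^*X^*$ is the unitary obtained by completing $T^*$ on $X^*$ in the $L_2(\mu)$-inner product. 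An atom of $\rho$ would yield a $U_T$-eigenvector in $H=V+\overline V$, hence (splitting, and using that $U_T$ commutes with conjugation) a $U_T$-eigenvector inside $V$; since eigenspaces of $U_T$ are $U_T$-invariant and $K^*X^*$ is dense in $V$, one pushes this eigenvector down to contradict $\sigma_p(T^*)=\emptyset$. (Alternatively one may simply quote from \cite{BM} or \cite{EG} that a fully supported invariant Gaussian measure of a hypercyclic operator is weakly mixing.)

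Granting these two verifications, \cite[Theorem~5.46]{BM} supplies the perfectly spanning family of $\T$-eigenvectors and Proposition~\ref{eigen-M} finishes the argument, the passage from M-system to syndetically hypercyclic being exactly Theorem~\ref{M-synd} (or the Gottschalk--Hedlund argument in the remark following it). The step I expect to be the main obstacle is the verification that $(X,\mathcal{B},\mu,T)$ is weakly mixing: the delicate point is that an eigenfunction of $U_T$ on the first chaos a priori lives only in the $L_2(\mu)$-completion of $X^*$ and need not be the conjugate of a genuine continuous linear functional, so converting it into an honest eigenvector of $T^*$ — the object that hypercyclicity forbids — requires the approximation/invariant-eigenspace argument sketched above rather than a one-line deduction.
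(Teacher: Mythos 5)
Your reduction to Proposition~\ref{eigen-M} is exactly the paper's strategy, but the way you produce the dense family of $\T$-eigenvectors contains a genuine gap. The paper cites \cite[Theorem~4.1]{Ba-Gr}, which yields a spanning set of $\T$-eigenvectors from cotype~$2$ together with the \emph{mere existence} of a $T$-invariant nondegenerate Gaussian measure --- no ergodicity or weak mixing is assumed. You instead cite \cite[Theorem~5.46]{BM}, whose hypotheses include weak mixing of $(X,\mathcal{B},\mu,T)$, and then try to extract weak mixing from hypercyclicity. That step fails. An atom of the maximal spectral type $\rho$ on the first chaos does give a unimodular eigenfunction $g$ of $U_T$, which one may take inside $V=\cls K^*X^*$; but the natural way to push $g$ back into $X$ is through the map $K:L_2(\mu)\to X$, $Kh=\int_X h(x)\,x\,d\mu(x)$, which satisfies $TK=KU_T^{-1}$ and is injective on $V$. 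What you obtain is therefore a unimodular eigenvector of $T$, not of $T^*$ --- and hypercyclic operators have no shortage of unimodular eigenvectors of $T$ (the entire construction is built on them), so there is no contradiction with $\sigma_p(T^*)=\emptyset$.

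To contradict $\sigma_p(T^*)=\emptyset$ you would need the eigenfunction to lie in $K^*X^*$ itself, and the density of $K^*X^*$ in $V$ gives no reason for the closed $U_T$-invariant eigenspace to meet that dense subspace (a dense subspace need not intersect a given proper closed invariant subspace). I also know of no statement in \cite{BM} or \cite{EG} to the effect that a fully supported invariant Gaussian measure of a hypercyclic operator is automatically weakly mixing; \cite[Theorem~5.46]{BM} assumes weak mixing precisely because it is not automatic. The repair is simply to replace your appeal to \cite[Theorem~5.46]{BM} by \cite[Theorem~4.1]{Ba-Gr}, after which the whole weak-mixing discussion becomes unnecessary and the rest of your argument (spanning eigenvectors, then Proposition~\ref{eigen-M} and Theorem~\ref{M-synd}) goes through as in the paper.
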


\begin{proof} 
By \cite[Theorem, 4.1]{Ba-Gr} these conditions imply the existence of a set of
$\T$-eigenvectors whose linear span is dense in $X$. Now apply Proposition \ref{eigen-M}.
\end{proof}

\begin{prop}
There are syndetically hypercyclic systems on Hilbert space which are not chaotic.
\end{prop}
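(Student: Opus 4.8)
The plan is to produce such a system by the Kalish construction (as in Corollary \ref{mild}), choosing the base measure $\sig$ so that syndetic hypercyclicity is automatic while the operator has essentially no periodic points.

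Fix a continuous symmetric probability measure $\sig$ on $\T$ whose closed support $L$ contains no root of unity; such $\sig$ exist (take $L$ to be a symmetric Cantor set inside a proper closed subarc of $\T\setminus\{\text{roots of unity}\}$ and $\sig$ a continuous probability measure with $\supp\sig=L$). Running the Kalish construction with this $\sig$, Lemma \ref{iii} shows that the eigenvector field $E(\la)=\chi_\la$ and the operator $K$ satisfy properties (i), (ii), (iii), so we obtain a separable Hilbert space $X=H_L$, a hypercyclic operator $T=(M-J)\rest H_L\in\mathcal{L}(X)$, and a fully supported $T$-invariant Gaussian measure $\mu$ on $X$. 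Since $\sig$ is continuous, Theorem \ref{sig=rho}(2) identifies the maximal spectral type of $U_T\rest H$ with the continuous measure $\rho=\sig$, so by Theorem \ref{rho}(2) the measure preserving system $(X,\mathcal{B},\mu,T)$ is ergodic. By \cite[Proposition 6.23]{BM} the operator $T$ is then frequently hypercyclic, and hence by Theorem \ref{UFH-synd} the Polish system $(X,T)$ is syndetically hypercyclic.

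It remains to check that $(X,T)$ is not chaotic, i.e. that its periodic points are not dense; I will show that $0$ is in fact the only periodic point. Suppose $x\neq 0$ and $T^n x=x$. Factoring $z^n-1=\prod_{j=0}^{n-1}(z-\om^j)$ with $\om=e^{2\pi i/n}$ and setting $x_0=x$, $x_k=(T-\om^{k-1}I)x_{k-1}$, we get $x_0\neq 0$ and $x_n=0$, so for the least $k$ with $x_k=0$ the vector $x_{k-1}$ is an eigenvector of $T$ with eigenvalue the root of unity $\om^{k-1}$. Thus it suffices to show that $T=(M-J)\rest H_L$ has no root-of-unity eigenvalue. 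Differentiating the eigenvalue equation, written as $(\zeta-\mu)f(\zeta)=(Jf)(\zeta)$, gives $(\zeta-\mu)f'(\zeta)=0$, which together with the boundary constraint at $\zeta=1$ shows that the eigenvectors of $M-J$ on $L_2(\T,d\la)$ are precisely the nonzero scalar multiples of the arc-indicators $\chi_\mu$, $\mu\in\T$ (for $\mu=1$, the constant functions). A short computation with the subspace $H_L=\ol{\spann}\{\chi_\la:\la\in L\}$ then shows that $\chi_\mu\in H_L$ if and only if $\mu\in L$, so $\sigma_p(T\rest H_L)=L$; since $L$ contains no root of unity, $T$ has no nonzero periodic point, and $(X,T)$ is not chaotic.

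The main obstacle is precisely the spectral bookkeeping for $T\rest H_L$: determining the eigenvectors of $M-J$ on the ambient space $L_2(\T,d\la)$ and then verifying that passing to the invariant subspace $H_L$ retains exactly the $\chi_\mu$ with $\mu\in L$, i.e. $\sigma_p(T\rest H_L)=L$. Everything else is routine: the existence of $\sig$ is elementary, and the implications ``ergodic fully supported invariant Gaussian measure $\Rightarrow$ frequently hypercyclic $\Rightarrow$ syndetically hypercyclic'' are furnished by the results cited above.
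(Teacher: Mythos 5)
Your argument is correct, and it takes a genuinely different (and more self-contained) route than the paper, which simply observes that the proof of \cite[Theorem 6.41]{BM} --- the existence of a frequently hypercyclic, non-chaotic operator on Hilbert space --- carries over almost verbatim once Proposition \ref{eigen-M} is substituted for \cite[Lemma 6.38.1]{BM}. You instead rerun the Kalish construction from scratch with a continuous symmetric $\sig$ whose support $L$ avoids the roots of unity, obtain frequent (hence, by Theorem \ref{UFH-synd}, syndetic) hypercyclicity from the ergodic fully supported Gaussian measure via \cite[Proposition 6.23]{BM}, and rule out chaos by the clean reduction of periodicity to root-of-unity eigenvalues through the factorization of $z^n-1$, together with the identification $\sigma_p(T\rest H_L)=L$. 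What this buys is an explicit example and an exact description of the periodic points ($\{0\}$ only), essentially re-proving the cited theorem of Bayart--Matheron along the way.

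Two small repairs are needed. First, $\T\setminus\{\text{roots of unity}\}$ contains no arc whatsoever, since the roots of unity are dense in $\T$; so ``a symmetric Cantor set inside a proper closed subarc of $\T\setminus\{\text{roots of unity}\}$'' does not parse. Take instead $L=C\cup\ol{C}$ where $C$ is a Cantor set inside some arc and disjoint from the countable set of roots of unity (such $C$ exist in any arc); then $L$ is symmetric, perfect, and avoids all roots of unity, and any continuous probability measure with support $L$, symmetrized, will do. Second, the ``short computation'' that $\chi_\mu\notin H_L$ for $\mu\notin L$ should be spelled out: choose an arc $A\ni\mu$ disjoint from $L\cup\{1\}$; each generator $\chi_\la$, $\la\in L$, has its only jumps at $\la$ and at $1$, hence is a.e.\ constant on $A$; the functions a.e.\ constant on $A$ form a closed subspace of $L_2(\T,d\la)$ containing $H_L$, while $\chi_\mu$ jumps inside $A$. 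The eigenvalue $\mu=1$ (eigenvectors the constants) is excluded similarly by an arc around $1$ disjoint from $L$, on whose ``outgoing'' side every generator vanishes a.e., so every element of $H_L$ vanishes there while the constant $1$ does not. With these repairs the proof is complete.
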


\begin{proof}
The proof of Theorem 6.41 in \cite{BM}, which asserts the existence of a frequently hypercyclic operator 
in $\mathcal{L}(H)$ which is not chaotic, applies here as well, almost verbatim.  We only
have to use Proposition \ref{eigen-M} instead of \cite[Lemma 6.38.1]{BM}.
\end{proof}


The following diagram may help the reader to sort out the main results of the last two sections.
For a hypercyclic operator $T$ on a Banach space $X$ we have the following
implications:

\begin{equation*}
\tiny{
\xymatrix
{
Chaotic \ar[r] &  M \ar[r] & E \ar[dr] &  & & &\\
 &  & &  Synd{\mhyphen}H  \ar[r]& WM  & {\text{\it{Mild Mixing}}} \ar[l]  & {\text{\it{Mixing}}} \ar[l]\\
& {\text{\it{Frequently hypercyclic}}}\ar[uur]  \ar[r] & UFH  \ar[ur] & & & &
}}
\end{equation*}

\begin{prob}
Provide examples to show that none of these implications can be reversed.
(Of course some cases here are already known.)
\end{prob}

%
\br
 
\bibliographystyle{amsplain}

\end{document}